\documentclass[sn-mathphys]{sn-jnl}

\jyear{2021}%

\usepackage{dsfont}
\usepackage{empheq}
\usepackage{lmodern}
\usepackage{amsmath}
\usepackage{bbm} 
\usepackage{comment}

\newcommand{\WD}{\textup{(WD)}} 
\newcommand{\SD}{\textup{(SD)}} 

\newtheorem{theorem}{Theorem}[section]
\newtheorem{corollary}{Corollary}

\newtheorem{lemma}[theorem]{Lemma}

\newtheorem{definition}[theorem]{Definition}
\newtheorem{remark}{Remark}

\begin{document}

\title[Finite-time stabilization for degenerate singular parabolic equations]{Finite-time stabilization via impulse control of degenerate singular parabolic equations}


\author*[1]{\fnm{Walid} \sur{Zouhair}}\email{walid.zouhair.fssm@gmail.com}

\author[2]{\fnm{Ghita} \sur{El Guermai}}\email{elguermai.ghita@ucd.ac.ma}

\author[3]{\fnm{Ilham} \sur{Ouelddris}}\email{ouelddris.ilham@gmail.com}

\affil*[1]{\orgdiv{Department of Mathematics}, \orgname{Ibn Zohr University, Faculty of Applied Sciences Ait Melloul},
\orgaddress{\street{Route Nationale N10}, \city{Azrou}, \postcode{B.P. 6146}, \country{Morocco}}}

 \affil[2]{\orgname{University Chouaib Doukkali,} Faculty Polydisciplinary Sidi Benour,
\orgaddress{\city{El Jadida}, \country{Morocco}}}

\affil[3]{\orgdiv{LMDP, UMMISCO (IRD-UPMC)}, \orgname{Cadi Ayyad University, Faculty of Sciences Semlalia},
\orgaddress{\city{Marrakesh}, \postcode{B.P. 2390}, \country{Morocco}}}

\affil[]{}

\affil[]{\orgdiv{\Large Dedicated to the memory of Professor Hammadi Bouslous}}


\abstract{
This paper examines the impulse controllability of degenerate singular parabolic equations through a modern framework focused on finite-time stabilization. Furthermore, we provide an explicit estimate for the exponential decay of the solution. The proof of our main result combines a logarithmic convexity estimate with specific spectral properties. Finally, we establish the existence and uniqueness of the minimal norm impulse control associated with the system.
}

\keywords{Parabolic equation, degenerate singular equations, finite-time stabilization, impulse control, norm optimal control.}

\pacs[MSC Classification]{93C27, 93C20, 35K05, 35R12, 35B40.}

\maketitle

\section{Introduction}\label{sec1}
In this work, we investigate the degenerate singular parabolic equation within the framework of null controllability via impulse controls supported in a nonempty open subset of the physical
domain, which are well-studied in the case of nondegenerate and nonsingular heat equation. More precisely, we consider the following impulse controlled system
\begin{equation}\label{1.1}
\partial_t y - (x^{\alpha} y_x)_x - \frac{\mu}{x^{\beta}}\,y = 0
\quad \text{in } (0,1)\times\bigl(0,T\bigr)\setminus\{\tau_k:k\ge0\},
\end{equation}
with initial condition $y_0 \in L^2(0,1)$, homogeneous boundary conditions at $x=1, (y(1, t)=0)$, and a left boundary at $x=0$ that may be
\emph{weakly} or \emph{strongly} degenerate:
\[
y(1,t)=0,\quad
\begin{cases}
y(0,t)=0, & \WD,\\[2pt]
(x^{\alpha} y_x)(0,t)=0, & \SD,
\end{cases}
\qquad t\in(0,T).
\]
The control acts at discrete times $\tau_k$ on an open subset $\omega\subset(0,1)$ that touches
the degeneracy point:
\begin{equation}\label{cond}
\omega=(0,a)\quad\text{for some } a\in(0,1),
\end{equation}
through the impulse update
\begin{equation*}
y(\cdot,\tau_k)=y(\cdot,\tau_k^-)+\mathbf{1}_\omega\,L_k\bigl(y(\cdot,t_k)\bigr),
\end{equation*}
where $L_k:L^2(0,1)\to L^2(\omega)$ are bounded linear operators to be designed.

Fix $T>0$ and $b>1$, and define the increasing sequence $\{t_k\}_{k\ge0}$ by
\begin{equation*}
t_k:=T\Bigl(1-\frac{1}{b^k}\Bigr),\qquad k\ge0,
\end{equation*}
so that $t_k$ converges to $T$ as $k$ goes to infinity. Each impulse time is placed at the midpoint
\begin{equation*}
\tau_k:=\frac{t_k+t_{k+1}}{2}\in(t_k,t_{k+1}).
\end{equation*}

It is worth mentioning that the non-impulsive controllability analysis of parabolic equations with the least possible controls in the context of nondegenerate problems have been extensively studied in the literature, we refer to \citep{ABD2006, ABDG2009, AJMW2024, CGKM2023} and the references therein. The degenerate/singular parabolic problems with distributed controls has been also analyzed in several recent papers. Among them, let us mention \citep{BCG, BAJS, CMV}, focuses on the controllability problem of degenerate parabolic equations. Additionally, works such as \citep{AFS, FS, V} treated cases involving singular potentials. Similarly to distributed controllability, there are also results concerning boundary controllability for degenerate/singular parabolic problems, see for instance \cite{BHV'2022,GL'2024}. Notably, a common strategy across all referenced papers is the use of suitably adapted Carleman estimates to address the singularities within the equation and establish the controllability properties. 

Impulsive controllability is a specialized concept in control theory that addresses the ability to manage and direct the behavior of a dynamical system through control inputs applied at precise, discrete time points, commonly known as “impulse times.” Unlike continuous control approaches where inputs are constantly adjusted, impulsive controllability applies controls intermittently, focusing on specific instants or intervals. This method allows for efficient intervention in scenarios where continuous control may be impractical or unnecessary, making it highly relevant for systems requiring abrupt adjustments, such as those in robotics, aerospace, and certain automated processes. In this context, impulsive controllability was first examined for a linear heat equation with homogeneous Dirichlet and Neumann boundary conditions, as demonstrated in \cite{pkm, RBKDP2022}. This foundational work introduced a novel approach, integrating the logarithmic convexity method with the Carleman commutator approach. Subsequent research in \citep{CGMZ'22} extended these findings to encompass dynamic boundary conditions for the same system. Additionally, a numerical study in \citep{CGMZ'221} presents a constructive algorithm designed to compute the minimal \( L^2 \)-norm impulse control, providing practical insights and simulations for control implementation. 

It is worth highlighting that the controllability results established for all the aforementioned parabolic systems via impulsive control focus primarily on achieving approximate null controllability. Recently, in \cite{CGMZ2023'}, the authors provided an explicit estimate for the exponential decay of the solution under impulsive controls, directly attaining impulse null controllability a more advanced and desirable result, as this has already been established for both distributed and boundary controls.

In the current work, we extend the results from \citep{BP'18}, which dealt with a one-dimensional degenerate elliptic operator with the zero Dirichlet condition, to a more generalized case involving a degenerate parabolic operator with a singular potential given by
    \begin{equation*}
        A u := (x^{\alpha} u_x)_x + \dfrac{\mu}{x^{\beta}} u,
    \end{equation*}
under the following assumptions on $\alpha, \beta$, and $\mu$  
    \begin{itemize}
		\item  sub-critical potentials:
		\begin{equation}\label{assump1}
			\begin{aligned}
				&\alpha \in \left[ 0 , 2 \right[ , \; \; 0< \beta< 2- \alpha \; \; \mathit{and} \;  \; \mu \in \mathbb{R},\\
				&\alpha \in \left[ 0 , 2 \right[ \backslash\lbrace{1}\rbrace, \; \; \beta= 2- \alpha \; \; \mathit{and} 		\; \; \mu < \mu( \alpha);
			\end{aligned}
		\end{equation}
		\item critical potentials:
		\begin{equation}
			\alpha \in \left[ 0 , 2 \right[ \backslash\lbrace{1}\rbrace, \; \; \beta= 2- \alpha \; \; \mathit{and} 		\; \; \mu = \mu( \alpha), \label{assump2}
		\end{equation}
	\end{itemize}
	where $\mu(\alpha)= \displaystyle{\frac{(1-\alpha)^{2}}{4}}$ is the constant in the following Hardy-Poincar\'e inequality
	\begin{equation}\label{hardy1}
		\displaystyle{\int_{0}^{1}} x^{\alpha} u_{x}^{2} \,\mathrm{d}x \geq \mu(\alpha) \displaystyle{\int_{0}^{1}} \displaystyle{\frac{u^{2}}{x^{2-\alpha}}} \,\mathrm{d}x, \qquad \text{for all}\;\; u \in \mathcal{C}^{\infty}_{c}\left( 0 , 1 \right).  
	\end{equation}
In a related context, Vancostenoble established in \citep{V} an improved Hardy-Poincar\'e inequality in the sub-critical case for any $\alpha \in \left[0,2\right),$ $\nu>0$ and $0<\gamma<2-\alpha$. This inequality guarantees the existence of a positive constant $\delta= \delta(\alpha, \nu,\gamma)$ such that the following estimate holds for every $u \in \mathcal{C}_0^{\infty}(0,1)$
    \begin{equation}\label{hardy2}
        \int_0^1 x^{\alpha} u_x^2 \,\mathrm{d}x + \delta \int_0^1 u^2 \,\mathrm{d}x \geq \int_0^1 \dfrac{\mu(\alpha)}{x^{2-\alpha}} u^2 \,\mathrm{d}x + \nu \int_0^1 \dfrac{u^2}{x^{\gamma}} \,\mathrm{d}x.
    \end{equation}
    
To obtain the impulse null controllability result, we adopt the same strategy as in \cite{Coron2017, pkm1}. Specifically, we establish the finite-time stabilization of the impulsive system \eqref{1.1} and subsequently conclude the controllability result. To fulfill this objective, we recall the following definition.
    \begin{definition}
        The system \eqref{1.1} is said to be finite-time stabilizable in time $T>0$, if there exist control operators $\mathcal{L}_{k}$ such that for every $y_0 \in L^2(0,1)$ the corresponding solution satisfies $\lim\limits_{t\to T^-} y(t)=0$.
    \end{definition}
In Section \ref{sec4}, we provide the proof of the main result presented in this work, which is given as follows
\begin{theorem}\label{thm1.4}
Consider $\tau_{k} = \frac{t_{k} + t_{k+1} }{2},$ with $t_{k}=T\left( 1-\dfrac{1}{b^k} \right)$ and $b>1$. Assume that the control subset \(\omega\) satisfies \eqref{cond}. Assume that \eqref{assump1} or \eqref{assump2} is true. Then, the system \eqref{1.1} is finite-time stabilizable. Moreover, there exist positive constants $C,\mathcal{M}$ such that for any initial condition $y_0 \in L^2(0,1)$, the solution $y$ of the system \eqref{1.1} satisfies
\begin{equation*}
\|y(t)\| \leq C \mathrm{e}^{-\frac{1}{\mathcal{M}}\left(\frac{T}{T-t}\right)} \left\|y_{0}\right\| \qquad \text { for any } \;\;0 \leq t<T^{-}.
\end{equation*}
Furthermore,
$\displaystyle \lim _{k \rightarrow \infty}\left\|\mathcal{L}_{k}\left(y\left(\tau_{k}\right)\right)\right\|_{L^2(\omega)}=0.$
\end{theorem}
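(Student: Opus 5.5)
The strategy follows \cite{Coron2017, pkm1, CGMZ2023'}. On each interval $(t_k,t_{k+1}]$ we let the free dynamics run on $(t_k,\tau_k)$. At $\tau_k$ we apply an impulse chosen to kill the low-frequency part of the state. On $(\tau_k,t_{k+1})$ the remaining high frequencies decay at a rate large enough to compensate the growth of the cut-off.

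\textbf{Spectral setup.} Let $-A$ be the self-adjoint operator with the \WD\ or \SD\ boundary conditions. Under \eqref{assump1} or \eqref{assump2}, the Hardy inequalities \eqref{hardy1}--\eqref{hardy2} show that it is bounded below with compact resolvent. Its eigenpairs $(\lambda_j,e_j)$ are explicit in terms of Bessel functions, with $\lambda_j\sim c\,j^2$. We need a spectral inequality of Lebeau--Robbiano type on $\omega=(0,a)$:
\[
\sum_{\lambda_j\le\Lambda}|a_j|^2\le C_1 e^{C_2\sqrt{\Lambda}}\int_\omega\Bigl|\sum_{\lambda_j\le\Lambda}a_je_j\Bigr|^2\,\mathrm{d}x .
\]
We would obtain it either from the logarithmic convexity / observation estimate at one time for the free semigroup, $\|e^{tA}u\|\le (Ke^{K/t}\|e^{tA}u\|_{L^2(\omega)})^{\theta}\|u\|^{1-\theta}$, or directly from the Bessel structure, since $\omega$ touches $0$. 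This is the main obstacle. The Carleman commutator argument must be adapted to the weight $x^\alpha$ and to the singular potential, including the critical case, where only the weaker Hardy control is available. I would first try the logarithmic convexity of $t\mapsto \|e^{tA}u\|^2_{L^2(\omega)}$-weighted frequency functions, as in \cite{BP'18}, and then deduce the spectral inequality through the standard equivalence.

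\textbf{Construction of $\mathcal{L}_k$.} Set $\Lambda_k=\rho\, b^{2k}$, so that $\sqrt{\Lambda_k}\sim (T-t_k)^{-1}$ up to constants; $\rho$ is chosen at the end. Let $\Pi_k$ be the projection onto $\mathrm{span}\{e_j:\lambda_j\le\Lambda_k\}$. Using the spectral inequality, HUM-type minimization gives a control $h_k\in L^2(\omega)$ with $\Pi_k(y(\tau_k^-)+\mathbf{1}_\omega h_k)=0$ and
\[
\|h_k\|\le \sqrt{C_1}\,e^{C_2\sqrt{\Lambda_k}/2}\|\Pi_k y(\tau_k^-)\|.
\]
The map $y(\tau_k^-)\mapsto h_k$ is linear. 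Composed with the bounded free evolution from $t_k$ to $\tau_k$, it defines $\mathcal{L}_k$. After the impulse, $y(\tau_k)\in\mathrm{Ran}(I-\Pi_k)$, so
\[
\|y(t_{k+1})\|\le e^{-\Lambda_k(t_{k+1}-\tau_k)}\bigl(1+\sqrt{C_1}e^{C_2\sqrt{\Lambda_k}/2}\bigr)e^{\lambda_0^-(\tau_k-t_k)}\|y(t_k)\|.
\]

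\textbf{Iteration and decay.} We have $t_{k+1}-\tau_k=T(b-1)/(2b^{k+1})$. Hence $\Lambda_k(t_{k+1}-\tau_k)\sim \rho\,b^{k}$, which dominates $C_2\sqrt{\rho}\,b^k$ once $\rho$ is large. This gives $\|y(t_{k+1})\|\le e^{-c b^k}\|y(t_k)\|$, and therefore $\|y(t_k)\|\le C e^{-c' b^k}\|y_0\|$. For $t\in[t_k,t_{k+1})$, the bound $T/(T-t)\le b^{k+1}$ together with the uniform boundedness of the dynamics on each piece yields the claimed estimate $C e^{-\frac{1}{\mathcal M}\frac{T}{T-t}}\|y_0\|$. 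Finally,
\[
\|\mathcal L_k(y(\tau_k))\|\le C e^{C_2\sqrt{\rho}\,b^k/2}e^{-c'b^k}\|y_0\|\to0
\]
for $\rho$ large, which proves the last assertion.
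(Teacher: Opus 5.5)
Your argument is correct, but it takes a genuinely different route from the paper. You run the classical Lebeau--Robbiano scheme: a spectral inequality on $\omega=(0,a)$ lets you cancel the low-frequency part \emph{exactly} at $\tau_k$, via $h_k=-B^*(BB^*)^{-1}\Pi_k y(\tau_k^-)$ with $B=\Pi_k\mathbf{1}_\omega$. The post-impulse state then lies in the high-frequency space.

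The paper never uses a spectral inequality. It defines $\mathcal{L}_k v=\sum_{\lambda_j\le\Lambda_k}\langle v,\phi_j\rangle h_j$ mode by mode. Each $h_j$ is only an \emph{approximate} impulse control, steering $\phi_j$ from $t_k$ to within $\varepsilon_k^2=e^{-\eta b^k}/\#\{\lambda_j\le\Lambda_k\}$ at $t_{k+1}$ (Corollary~\ref{cor1}, Lemma~\ref{lem2.5}). The high modes decay freely. The per-mode costs $e^{C(t_{k+1}-t_k)^{-1/2}\sqrt{\ln(1/\varepsilon_k)}}$ are combined by Cauchy--Schwarz, which is why the paper needs the counting bound \eqref{equa34}.

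Your route needs only the one-time observation estimate and finite-dimensional spectral subspaces; there is no eigenvalue counting and no $\varepsilon$-dependent cost lemma. It gives the clean contraction $\|y(t_{k+1})\|\le e^{-cb^k}\|y(t_k)\|$ and works for every fixed $b>1$ by tuning only $\rho$. The paper, by contrast, ends by re-choosing $b=e^{32/\eta}$ to absorb an $e^{2k}$ loss. In exchange, the paper's route reuses the approximate impulse controllability result as a black box and avoids the Gram-matrix inversion.

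Two corrections to your write-up.

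First, what you call the main obstacle is already available. Theorem~\ref{theo1.1} is exactly the one-time estimate, with constant $e^{C_1(1+\delta+\delta t+1/t)}$ and an exponent $\theta$ (called $\rho$ there) independent of $t$. Apply it to $u_0=e^{-tA}\phi$ with $\phi\in\mathrm{Ran}\,\Pi_\Lambda$, and use $\|e^{-tA}\phi\|\le e^{\Lambda t}\|\phi\|$. This gives $\|\phi\|\le e^{C(1+1/t)+\frac{1-\theta}{\theta}\Lambda t}\|\phi\|_{L^2(\omega)}$. Taking $t=\Lambda^{-1/2}$ yields your spectral inequality, so no new Carleman work is required. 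The Bessel alternative is not available in general: for $\beta<2-\alpha$ and $\mu\neq0$ the eigenfunctions are not explicit.

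Second, the dynamics on $[\tau_k,t_{k+1})$ is \emph{not} uniformly bounded in $k$, since the jump can amplify by $1+\sqrt{C_1}e^{C_2\sqrt{\rho}\,b^k/2}$. You must absorb this factor exactly as you do for the control. Use that $c'$ grows linearly in $\rho$ (about $\rho T/(2b)$) while the cost exponent grows only like $\sqrt{\rho}$. Then $\|y(t)\|\le Ce^{-c''b^k}\|y_0\|$ on all of $[t_k,t_{k+1})$, and $T/(T-t)\le b^{k+1}$ gives the stated bound.
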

The main key to prove the above theorem is the following result of observability estimate at a specific time point  that has been recently established in \citep{MMOS}, where the method of proof combines both the logarithmic convexity and the Carleman commutator.
\begin{theorem}\label{theo1.1}
Assume that the observation region $\omega$ fulfills the geometric condition \eqref{cond} and suppose that one of the assumptions \eqref{assump1}-\eqref{assump2} holds. Then, there exist $\mathcal{C}_{1} >0$ and $\rho \in (0,1)$ such that the following estimate holds 
    \begin{equation}
        \|u(T)\| \leq\left( \mathrm{e}^{\mathcal{C}_1\left(1+ \delta + \delta T +\frac{1}{T} \right)}\|u( T)\|_{L^2(\omega)}\right)^\rho\left\|u_0\right\|^{1-\rho}.
    \end{equation}
Here, $\delta$ is the constant in the improved Hardy-Poincar\'e inequality \eqref{hardy2}, and for the constant $\mathcal{C}_1$ there is a specific expression that is provided in \citep{MMOS}. Furthermore, $u$ in this context stands for the solution of the following non-impulsive system
    \begin{equation}\label{1.2}
    \begin{cases}
        \partial_t u-\left(x^\alpha u_x\right)_x-\dfrac{\mu}{x^\beta} u=0, & \text { in }(0,1) \times(0, T) , \\ 
        u(1, t)=0,  & \text { on }(0, T)\\
        \left\{\begin{array}{ll}
        u(0, t)=0, & (W D), \\
        \left(x^\alpha u_x\right)(0, t)=0, & (S D),
        \end{array} \right. &\text { on }(0, T),\\
        u(x, 0)=u_0(x), & \text { on }(0,1).
    \end{cases}
    \end{equation}
\end{theorem}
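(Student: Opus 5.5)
The plan follows the logarithmic-convexity / Carleman-commutator strategy (in the spirit of Phung and Buffe--Phung), adapted to the degenerate singular operator $A u = (x^{\alpha}u_x)_x + \mu x^{-\beta}u$. I do not check the constants in detail.

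\textbf{Setup.} First I realize $A$ as a self-adjoint operator on $L^2(0,1)$, with the domain dictated by (WD) or (SD), via the form
\[
\mathcal{Q}(u)=\int_0^1 x^\alpha u_x^2\,dx-\mu\int_0^1 x^{-\beta}u^2\,dx .
\]
By \eqref{hardy1} in the critical case \eqref{assump2}, and by \eqref{hardy2} in the sub-critical case \eqref{assump1}, this form satisfies $\mathcal{Q}(u)+\delta\|u\|^2\ge 0$. Hence $A-\delta$ is dissipative and $\|u(t)\|\le e^{\delta t}\|u_0\|$.

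\textbf{Weighted functional.} Fix a parameter $h>0$ and the weight
\[
\Phi(x,t)=-\frac{x^{2-\alpha}}{(2-\alpha)^2\,(T-t+h)}.
\]
It is adapted to the degeneracy and maximal at $x=0$, which lies in the closure of $\omega=(0,a)$. It satisfies the eikonal-type relation $\partial_t\Phi+x^\alpha\Phi_x^2=0$, up to harmless terms. Set $f=e^{\Phi/2}u$. Then $f$ solves $\partial_t f-\mathcal{S}f-\mathcal{A}f=0$, where $\mathcal{S}$ is symmetric and $\mathcal{A}$ antisymmetric. I then study the frequency quotient
\[
N(t)=\frac{\langle -\mathcal{S}f,f\rangle}{\|f\|^2}.
\]

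\textbf{Commutator step (main obstacle).} The key step is the commutator estimate
\[
\langle(\mathcal{S}'+[\mathcal{S},\mathcal{A}])f,f\rangle\le \frac{C}{T-t+h}\,\langle-\mathcal{S}f,f\rangle+C(1+\delta)\|f\|^2 .
\]
This is where I expect the difficulty to lie. The commutator of $(x^\alpha\partial_x)_x$ with $x^{1-\alpha}\partial_x$-type terms produces the singular term $\mu\beta x^{-\beta-1}\cdot x^{1-\alpha}f^2$ and boundary terms at $x=0$.
\begin{itemize}
\item The boundary terms should vanish in both the (WD) and (SD) cases, by the behaviour of domain elements near $0$. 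To check this I would first work with regular approximations and then pass to the limit.
\item The singular term has the same homogeneity $x^{-(2-\alpha)}$ as the Hardy term exactly when $\beta=2-\alpha$. It is absorbed using \eqref{hardy1} in the critical case, and using the improved inequality \eqref{hardy2} (the $\nu x^{-\gamma}$ gain, at the cost of $\delta$) in the sub-critical case.
\end{itemize}
Granting this estimate, $N'(t)\le \frac{C}{T-t+h}N+C(1+\delta)$. One also has $\frac{d}{dt}\|f\|^2=-2N\|f\|^2$. Integrating these two relations over $(0,T)$ gives the log-convexity bound
\[
\Big(\int_0^1 e^{\Phi(T)}u(T)^2\Big)^{1+C}\ \ge\ e^{-C(1+\delta+\delta T)}\,\|u(T)\|^{2(\cdots)}\ \cdots
\]
More usefully, it gives the form
\[
\|u(T)\|^2\le e^{C(1+\delta+\delta T+1/T)}\Big(\tfrac{\|u_0\|^2}{\|u(T)\|^2}\Big)^{C}\int_0^1 e^{\Phi(T)}u(T)^2 .
\]

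\textbf{Conclusion.} To finish, I split $\int_0^1 e^{\Phi(T)}u(T)^2$ into the part over $(0,a)$, where $e^{\Phi}\le 1$, and the part over $(a,1)$, where $e^{\Phi(T)}\le e^{-c a^{2-\alpha}/h}$. This gives
\[
\|u(T)\|^2\le e^{C/h}\,\|u(T)\|_{L^2(\omega)}^2+e^{-c/h}\,e^{C(1+\delta+\delta T+1/T)}\|u_0\|^2 ,
\]
after absorbing the quotient factor, which is bounded via $\|u(T)\|\le e^{\delta T}\|u_0\|$ and a choice of $h$. Optimizing in $h$ (the standard interpolation trick) then yields the Hölder-type inequality
\[
\|u(T)\|\le\big(e^{\mathcal{C}_1(1+\delta+\delta T+1/T)}\|u(T)\|_{L^2(\omega)}\big)^\rho\|u_0\|^{1-\rho},
\]
with $\rho\in(0,1)$ depending only on the constant $C$ from the differential inequality.
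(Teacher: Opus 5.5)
The paper gives no proof of this statement; it imports it from \citep{MMOS}. The method there, logarithmic convexity plus a Carleman-commutator computation with a weight centred at the degenerate endpoint, is the one you outline. Your plan nevertheless has a genuine gap, at exactly the point where you leave the constants open.

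\textbf{The commutator step must be an identity with coefficient exactly $1$.} Your way of handling the singular term cannot produce this. With your $\Phi$ one has $x^\alpha\Phi_x=-x/((2-\alpha)(T-t+h))$, so $\mathcal{A}=\frac{1}{(2-\alpha)(T-t+h)}\bigl(x\partial_x+\tfrac12\bigr)$ is the dilation generator. Consequently the potential produces a multiple of $x^{-\beta}f^2$, not of $x^{-\beta-\alpha}f^2$. The homogeneity of $(x^\alpha\partial_x)_x$, $\eta$ and $x^{-\beta}$ then gives
\[
-\langle(\mathcal{S}'+[\mathcal{S},\mathcal{A}])f,f\rangle=\frac{\langle-\mathcal{S}f,f\rangle}{T-t+h}+\Bigl(1-\frac{\beta}{2-\alpha}\Bigr)\frac{\mu}{T-t+h}\int_0^1\frac{f^2}{x^{\beta}}\,dx-\frac{f_x(1,t)^2}{(2-\alpha)(T-t+h)} .
\]
It is this quantity, with the minus sign, that bounds $N'$ from above; your inequality has the sign reversed. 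The term at $x=1$ does not vanish, but it is harmless because $(0,1)$ is star-shaped with respect to the centre $x=0$.
\begin{itemize}
\item For $\beta=2-\alpha$ the singular contribution recombines exactly into $\mathcal{S}$. It cannot be absorbed by \eqref{hardy1}: in the critical case solutions behave like $c(t)x^{(1-\alpha)/2}$ at $0$. Hence $\int x^{\alpha}f_x^2$ and $\int x^{\alpha-2}f^2$ are both infinite, and $\langle-\mathcal{S}f,f\rangle$ controls neither. For the same reason the boundary terms at $0$ do not vanish one by one. Only their combination vanishes; writing $f=x^{(1-\alpha)/2}v$, it equals $\lim_{x\to0}x^2v_x^2$.
\item For $\beta<2-\alpha$ the leftover is of lower order. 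Using \eqref{hardy2} with a large $\nu$, paid for through $\delta$, it is bounded by $\epsilon\langle-\mathcal{S}f,f\rangle+C_\epsilon\|f\|^2$ with $\epsilon$ as small as needed.
\end{itemize}

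\textbf{Why the exact coefficient matters.} From $N'\le\frac{N}{T-t+h}$ (up to $\delta$-terms) you get $N(t)\ge\frac{h}{T-t+h}N(T)$. With $F=\|f\|^2$ this gives $2hN(T)\ln(1+T/h)\le\ln\frac{F(0)}{F(T)}$. Since $4hN(T)F(T)\ge\int_0^1(-\Phi(T))e^{\Phi(T)}u(T)^2$, it follows that
\[
\int_a^1e^{\Phi(T)}u(T)^2\,dx\le\frac{2(2-\alpha)^2h}{a^{2-\alpha}\ln(1+T/h)}\ln\Bigl(\frac{\|u_0\|^2}{F(T)}\Bigr)F(T).
\]
The only lower bound at hand is $F(T)\ge e^{-1/((2-\alpha)^2h)}\|u(T)\|^2$. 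Inserting it, the right-hand side contains the $h$-independent term $\frac{2}{a^{2-\alpha}\ln(1+T/h)}F(T)$. This term can be absorbed only because $\ln(1+T/h)\to\infty$ as $h\to0$; the resulting restriction $h\le c_aT$ is what produces $e^{C/T}$.

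Now suppose instead $N'\le\frac{C}{T-t+h}N$ with $C>1$. Then $\int_0^T\bigl(\frac{h}{T-t+h}\bigr)^C\,dt\le\frac{h}{C-1}$, so the logarithm is lost. The corresponding term in the bound becomes at least $\frac{2(C-1)}{a^{2-\alpha}}F(T)$, which is not absorbable unless $C-1$ is small compared with $a^{2-\alpha}$. So the conclusion does not follow from a differential inequality with an unspecified $C$, and $\rho$ does not depend only on $C$.

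\textbf{The last step runs backwards.} The energy bound $\|u(T)\|\le e^{\delta T}\|u_0\|$ bounds $\|u_0\|/\|u(T)\|$ from \emph{below}, so it cannot absorb the quotient. The quotient is precisely what fixes $h\simeq 1/\ln(\mathrm{e}\|u_0\|^2/\|u(T)\|^2)$. That choice converts $e^{1/((2-\alpha)^2h)}$ into a power of $\|u_0\|/\|u(T)\|$, which is where the exponent $\rho$ comes from. Relatedly, your intermediate ``more useful form'' cannot hold for arbitrary $h$, since its right-hand side tends to $0$ as $h\to0$.
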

\begin{remark}
    It is worth mentioning that the geometric condition on the control region \( \omega \) stated in \eqref{cond} is required to achieve the above result. Removing this assumption and considering any non-empty open subset \( \omega \) of \( (0,1) \) remains an open question, as the main difficulty lies in selecting an appropriate weight function to establish the observability inequality.
\end{remark}
The rest of the paper is structured as follows: Section \ref{sec2} introduces the functional setting and establishes the well-posedness of the singular degenerate equation. In Section \ref{sec4}, we derive finite-time stabilization through impulse controls, which consequently leads to the impulse null controllability of system \eqref{1.1}.
\section{Functional setting}\label{sec2}
 In this section, we introduce the functional framework associated with singular degenerate operators, aiming to clarify the problem. We present the relevant Sobolev spaces and their properties, distinguishing between the sub-critical and critical cases.

To clarify the mathematical framework, it is important to understand the role of the weighted Sobolev spaces account for the degeneracy at $x=0$, where the weight $x^\alpha$ is introduced to compensate for the loss of uniform ellipticity and ensures finite energy. 

More specifically, we begin with the following definition in the sub-critical case, where the functional spaces are explicitly adapted to the order of degeneracy.

\begin{definition}
Let the basic weighted Hilbert space $H_{\alpha}^{1}(0,1)$ be given as follows 
$$H_{\alpha}^{1}(0,1):=\{u\in L^{2}(0,1)\cap H_{loc}^{1}((0,1]) \mid x^{\alpha/2}u_{x}\in L^{2}(0,1)\},$$
associated to the norm
    \begin{equation*}
        \|u\|^{2}_{H^{1}_{\alpha}(0,1)}:= \|u\|^{2}_{L^{2}(0,1)} + \|x^{\alpha/2} u_x\|^{2}_{L^{2}(0,1)}.
    \end{equation*}
\end{definition}

 Depending on the degree of degeneracy, we adapt the boundary conditions. For weak degeneracy ($0 \le \alpha < 1$), the trace is well-defined and we impose $u(0)=0$, leading to
$$H_{\alpha,0}^{1}(0,1):=\{u\in H_{\alpha}^{1}(0,1) \mid u(0)=u(1)=0\}.$$

For strong degeneracy ($1 \le \alpha < 2$), the trace at \(x=0\) is no longer defined, instead, we impose a Neumann-type condition $(x^\alpha u_x)(0)=0$, and retain only $u(1)=0$:
$$H_{\alpha,0}^{1}(0,1):=\{u\in H_{\alpha}^{1}(0,1) \mid u(1)=0\}.$$

In this case, one can consider the linear operator
    \begin{equation*}
        Au= (x^{\alpha} u_x)_x + \dfrac{\mu}{x^{\beta}} u,
    \end{equation*}
with its domain depending on the value of $\alpha$
    \begin{equation}\label{DA1}
        \begin{aligned}
            &\text{If}\;\; 0\leq \alpha <1\\
			&D(A):=\big\{u\in H_{\alpha,0}^1(0,1)\cap H_{loc}^2((0,1])\, \mid  Au \in L^2(0,1) \big\}\\
			&\text{and if}\;\; 1< \alpha<2\\
			&D(A):=\big\{u\in H_{\alpha,0}^1(0,1)\cap H_{loc}^2((0,1])\, \mid  Au \in L^2(0,1)\quad \text{and}\quad (x^{\alpha}u_x)(0)=0 \big\}.
		\end{aligned}
  \end{equation}
Otherwise, in the critical case, it is crucial to adjust the structure of the Sobolev spaces accordingly, as outlined in the following definition.
    \begin{definition}
		Take $\alpha \in [0, 1)$, we define
            \begin{equation*}
                H_{\alpha, 0}^{1,\mu(\alpha)}(0,1):=\left\{u \in H_{\alpha}^{1,\mu(\alpha)}(0,1) \mid u(0)=u(1)=0\right\},
            \end{equation*}
		and for $\alpha \in [1, 2)$, we consider
            \begin{equation*}
                H_{\alpha, 0}^{1,\mu(\alpha)}(0,1):=\left\{u \in H_{\alpha}^{1,\mu(\alpha)}(0,1) \mid u(1)=0\right\}.
            \end{equation*}
		In this setting, $H_{\alpha}^{1}(0,1)$ is the Hilbert space defined as
            \begin{equation*}
                H_{\alpha}^{1,\mu(\alpha)}(0,1):=\left\{u \in L^{2}(0,1) \cap H_{l o c}^{1}((0,1]) \mid \int_0^1 \left( x^{\alpha} u_x^2 - \dfrac{\mu(\alpha)}{x^{2-\alpha}} u^2 \right) \,\mathrm{d}x < \infty\right\},
            \end{equation*}
		equipped to the following scalar product
            \begin{equation*}
                \left\langle u , v \right\rangle_{H_{\alpha}^{1,\mu(\alpha)}}= \int_0^1 \left( uv + x^{\alpha} u_x v_x + \dfrac{\mu(\alpha)}{x^{2-\alpha}} u v \right)\,\mathrm{d}x.
            \end{equation*}
	\end{definition}
In this context, we introduce the operator
    \begin{equation*}
        Au= (x^{\alpha} u_x)_x + \dfrac{\mu(\alpha)}{x^{2-\alpha}},
    \end{equation*}
with domain 
     \begin{equation}\label{DA2}
        \begin{aligned}
            &\text{If}\;\; 0\leq \alpha <1\\
			&D(A):=\big\{u\in H_{\alpha, 0}^{1,\mu(\alpha)}(0,1)\cap H_{loc}^2((0,1])\, \mid  Au \in L^2(0,1) \big\}\\
			&\text{and if}\;\; 1< \alpha<2\\
			&D(A):=\big\{u\in H_{\alpha, 0}^{1,\mu(\alpha)}(0,1)\cap H_{loc}^2((0,1])\, \mid  Au \in L^2(0,1)\quad \text{and}\quad (x^{\alpha}u_x)(0)=0 \big\}.
		\end{aligned}
  \end{equation}
Indeed, Vancostenoble in \citep{V} proved that the operator $(A-\delta I)$ is self-adjoint and negative in both cases. Consequently, this allows us to deduce the following regularity result
    \begin{theorem}\label{thm2.3}
        Assume that one of the assumptions \eqref{assump1} or \eqref{assump2} is valid. Then, for each initial state $u_0 \in L^2(0,1)$ there exists a unique weak solution of the system \eqref{1.2} such that
            \begin{equation*}
			u \in \mathcal{C}\left( \left[0,T \right], L^2(0,1) \right) \cap \mathcal{C}\left( \left(0,T \right], D(A) \right) \cap \mathcal{C}^1\left( \left(0,T \right], L^2(0,1) \right).
		\end{equation*}
        Moreover, if $u_0 \in D(A)$ one has
        \begin{equation*}
			u \in \mathcal{C}\left( \left[0,T \right], D(A) \right) \cap \mathcal{C}^1\left( \left[0,T \right], L^2(0,1) \right).
		\end{equation*}
    \end{theorem}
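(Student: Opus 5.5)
The plan is to use semigroup theory. By the result of Vancostenoble recalled above, the operator $A-\delta I$, with domain \eqref{DA1} in the sub-critical case and \eqref{DA2} in the critical case, is self-adjoint and nonpositive on $L^2(0,1)$. We first check that it is densely defined and closed. Self-adjointness gives closedness directly. Density holds because $\mathcal{C}_c^\infty(0,1)$ is contained in $D(A)$: for such $u$, the function $Au$ is smooth with compact support, and all boundary requirements are trivially met. Hence $A-\delta I$ is m-dissipative. Indeed, $\langle (A-\delta I)u,u\rangle\le 0$, and self-adjointness together with nonpositivity gives $\operatorname{Ran}(\lambda I-(A-\delta I))=L^2(0,1)$ for every $\lambda>0$. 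By Lumer--Phillips, or directly by the spectral theorem, $A-\delta I$ generates a contraction semigroup. This semigroup is in fact analytic, since it is defined by functional calculus from a self-adjoint operator bounded above.

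Next we pass from $A-\delta I$ to $A$ by the bounded perturbation $\delta I$. Then $A$ generates the analytic semigroup $S(t)=e^{\delta t}e^{t(A-\delta I)}$, which satisfies $\|S(t)\|\le e^{\delta t}$. We define the weak (mild) solution of \eqref{1.2} as $u(t)=S(t)u_0$. Uniqueness comes from the standard uniqueness of mild solutions for a generator. Strong continuity of the semigroup gives $u\in\mathcal{C}([0,T],L^2(0,1))$.

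For regularity with general $u_0\in L^2(0,1)$, we use the spectral representation. Write $-A+\delta I=\int_{[0,\infty)}\lambda\, dE_\lambda$. Then $S(t)u_0=\int e^{(\delta-\lambda)t}\,dE_\lambda u_0$, and
\[
\|AS(t)u_0\|\le \sup_{\lambda\ge0}|\delta-\lambda|e^{(\delta-\lambda)t}\,\|u_0\|\le C\Big(1+\tfrac1t\Big)e^{\delta t}\|u_0\|.
\]
So $S(t)u_0\in D(A)$ for every $t>0$. Moreover $t\mapsto AS(t)u_0=S(t-s)AS(s)u_0$ is continuous on $[s,T]$ for each $s>0$. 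This gives $u\in\mathcal{C}((0,T],D(A))$ with the graph norm, and $u'=Au\in\mathcal{C}((0,T],L^2(0,1))$, that is, $u\in\mathcal{C}^1((0,T],L^2(0,1))$. If $u_0\in D(A)$, the standard semigroup identities $AS(t)u_0=S(t)Au_0$ and $\frac{d}{dt}S(t)u_0=S(t)Au_0$ hold on $[0,T]$. They give continuity into $D(A)$ and $\mathcal{C}^1$ regularity up to $t=0$.

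The main obstacle is justifying that the cited self-adjointness applies to the exact domains written in \eqref{DA1}–\eqref{DA2}. Two points need care. The first is the critical case, where the energy space $H^{1,\mu(\alpha)}_{\alpha,0}$ is not comparable to $H^1_\alpha$. The second is the value $\alpha=1$, which is excluded in \eqref{DA1} while the strongly degenerate space is defined for $1\le\alpha<2$. The approach is to realize $-A+\delta I$ as the operator associated with the closed, coercive symmetric form
\[
a(u,v)=\int_0^1 \Big(x^\alpha u_xv_x-\frac{\mu}{x^\beta}uv+\delta uv\Big)\,dx
\]
on the corresponding energy space. Coercivity of this form follows from the Hardy–Poincaré inequalities \eqref{hardy1} and \eqref{hardy2}. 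The form's operator is self-adjoint, and integrating by parts on $(\varepsilon,1)$ and letting $\varepsilon\to0$ identifies its domain with \eqref{DA1}–\eqref{DA2}; the boundary term at $0$ vanishes because of the Dirichlet or Neumann condition. Once this identification is in place, everything else is routine semigroup theory.
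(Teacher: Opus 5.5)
Your proposal is correct and follows essentially the same route as the paper. The paper simply cites Vancostenoble for the self-adjointness and nonpositivity of $A-\delta I$ and reads off the stated regularity from standard (analytic) semigroup theory. Your spectral bound on $\|AS(t)u_0\|$ and your form-based sketch identifying the domains \eqref{DA1}--\eqref{DA2}, including your observation that $\alpha=1$ is missing from \eqref{DA1}, only make explicit what the paper leaves to that citation.
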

On the other hand, we rewrite system \eqref{1.1} as the impulsive Cauchy problem
\begin{equation}
\text{(ACP)} \;\; \begin{cases}
\hspace{-0.1cm} \partial_t y(t)=A y(t), \quad (0,T)\setminus \displaystyle \bigcup _{k\geq 0 }\{\tau_{k}\}, \nonumber\\
\hspace{-0.1cm} y\left(\cdot,\tau_{k}\right) = y\left(\cdot,\tau_{k}^{-}\right) + \mathds{1}_{\omega} h(t_{k}),\nonumber\\
\hspace{-0.1cm} y(0)=y_0, \nonumber
\end{cases}
\end{equation}
For all $y_0\in L^2(0,1)$, the system (ACP) has a unique mild solution given by 
    \begin{equation}\label{sol1}
        y(t) = \mathrm{e}^{t A} y_{0} + \sum_{k\geq 1} \mathds{1}_{\{t\geq \tau_{k} \}}(t)\, \mathrm{e}^{(t-\tau_{k})A} \mathds{1}_{\omega} h(t_{k}), \qquad t\in (0,T).
    \end{equation}
For further details on the existence and uniqueness of solutions for impulsive systems, see \citep{FO, SLAPWZ2}.
\subsection{Impulsive approximate controllability}
For a given $T>0$, let us take into account the system with a single pulse $\tau \in (0,T)$ given as follows
    \begin{equation}\label{1.3}
        \begin{cases}\partial_t y-\left(x^\alpha y_x\right)_x-\dfrac{\mu}{x^\beta} y=0, & \text { in }(0,1) \times(0, T) \setminus \{\tau\}, \\ 
        y(\cdot, \tau)=y\left(\cdot, \tau^{-}\right)+\mathds{1}_{\omega} h(.,\tau), &\text { in } (0,1),\\
        y(1, t)=0,  & \text { on }(0, T)\\
        \left\{\begin{array}{ll}
        y(0, t)=0, & (W D), \\
        \left(x^\alpha y_x\right)(0, t)=0, & (S D),
        \end{array} \right. &\text { on }(0, T),\\
        y(x, 0)=y_0(x), & \text { on }(0,1),
        \end{cases}
    \end{equation}
In particular, \cite{MMOS} focuses on analyzing whether it is possible to find a control function acting on \( \omega \times \{\tau\} \) that drives the solution of \eqref{1.3} from any initial state \( y_0 \) to a neighborhood of zero at the final time \( T \), a concept known as null approximate impulse controllability with a single pulse control. To clarify the problem under consideration, we recall the following definition.
\begin{definition}
The degenerate/singular system \eqref{1.3} achieves null approximate impulse controllability at the final time $T$. In other words, for any $\varepsilon > 0$ and any $y_0 \in L^2(0,1)$, a control function $h(\cdot, \tau) \in L^2(\omega)$ exists such that the solution of \eqref{1.3} satisfies
    \begin{equation*}
        \|y(\cdot, T)\| \leq \varepsilon\left\Vert y_0\right\Vert .
    \end{equation*}
\end{definition}
\noindent This implies that for every $\varepsilon >0$ and $y_0 \in L^2(0,1)$, the set defined as
\begin{equation*}
\mathcal{R}_{T, y_0, \varepsilon} :=\left\{h(\cdot, \tau) \in L^{2}(\omega): \text { the solution of }\eqref{1.3}\text { verifies }\left\Vert y(\cdot, T)\right\Vert \leq \varepsilon\left\Vert y_0\right\Vert\right\},
\end{equation*}
is nonempty. This brings us to the following result concerning the approximate impulse controllability for system \eqref{1.3}, with the proof provided in Theorem 5 in \citep{MMOS}.
\begin{corollary}\label{cor1}
Assume that \eqref{assump1} or \eqref{assump2} holds true and  the control subset $\omega$ satisfies \eqref{cond}. Then, the system \eqref{1.3} is null approximate impulse controllable at any time $T > 0$. Moreover, for any $\varepsilon > 0,$ there exists a positive constant $M$ such that
    \begin{equation}
        \dfrac{1}{M^2}\|h(\cdot,\tau)\|_{L^{2}(\omega)}^{2}+\dfrac{1}{\varepsilon^2}\|y(\cdot, T)\|^{2} \leq\left\|y_0\right\|^{2},
    \end{equation}
with an explicit expression of the constant, as provided by 
    \begin{equation*}
        M(T,\alpha,\delta,\varepsilon,\rho) :=\dfrac{1}{\varepsilon^{\frac{1-\rho}{\rho}}} \mathrm{e}^{\mathcal{C}_1\left(1+ \delta+\delta(T+\tau)+ \frac{1}{T-\tau}\right)}, \qquad \text{where}\;\; \rho \in (0,1).
    \end{equation*}
\end{corollary}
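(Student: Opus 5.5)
The plan is to prove Corollary \ref{cor1} by duality. Theorem \ref{theo1.1} gives an observability estimate at a single time. We turn it into an "interpolated observability" estimate for the adjoint problem, and then build the control $h$ by minimizing a penalized quadratic functional (the HUM-type argument of \cite{pkm, RBKDP2022} adapted to one pulse).

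\textbf{Step 1: adjoint observability with a tunable weight.} Since $A$ is self-adjoint and $A-\delta I$ is negative (Vancostenoble \citep{V}), the backward adjoint solution with final datum $\varphi_T\in L^2(0,1)$ is $\varphi(t)=\mathrm{e}^{(T-t)A}\varphi_T$. It satisfies $\|\mathrm{e}^{sA}\|\le \mathrm{e}^{\delta s}$. By time reversal, Theorem \ref{theo1.1} applies on an interval of length $T-\tau$ with initial datum $\varphi_T$, which gives
\begin{equation*}
\|\varphi(\tau)\|\le \bigl(K\|\varphi(\tau)\|_{L^2(\omega)}\bigr)^{\rho}\|\varphi_T\|^{1-\rho},\qquad K:=\mathrm{e}^{\mathcal{C}_1\left(1+\delta+\delta(T-\tau)+\frac{1}{T-\tau}\right)}.
\end{equation*}
We combine this with $\|\varphi(0)\|\le \mathrm{e}^{\delta\tau}\|\varphi(\tau)\|$ and apply Young's inequality $a^\rho b^{1-\rho}\le \rho\,\eta^{-(1-\rho)/\rho}a+(1-\rho)\eta\, b$ with $\eta$ tied to $\varepsilon$. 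The aim is
\begin{equation*}
\|\varphi(0)\|^2\le M^2\|\varphi(\tau)\|^2_{L^2(\omega)}+\varepsilon^2\|\varphi_T\|^2 .
\end{equation*}
Here $M=\varepsilon^{-(1-\rho)/\rho}\mathrm{e}^{\mathcal{C}_1(1+\delta+\delta(T+\tau)+1/(T-\tau))}$. The factor $\mathrm{e}^{\delta\tau}$ and the harmless numerical constants are absorbed into $\mathcal{C}_1$, which turns $\delta(T-\tau)$ into $\delta(T+\tau)$. Tracking these exponents correctly, so that the stated form of $M$ comes out, is the only delicate bookkeeping.

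\textbf{Step 2: minimization.} On $L^2(0,1)$ define
\begin{equation*}
J(\varphi_T)=\tfrac{M^2}{2}\|\varphi(\tau)\|^2_{L^2(\omega)}+\tfrac{\varepsilon^2}{2}\|\varphi_T\|^2+\langle y_0,\varphi(0)\rangle .
\end{equation*}
$J$ is continuous and strictly convex, and it is coercive because of the term $\frac{\varepsilon^2}{2}\|\varphi_T\|^2$. Hence it has a unique minimizer $\hat\varphi_T$. We set $h(\cdot,\tau):=M^2\mathbf{1}_\omega\hat\varphi(\tau)$. Writing the Euler–Lagrange equation and using the mild-solution formula $y(T)=\mathrm{e}^{TA}y_0+\mathrm{e}^{(T-\tau)A}\mathbf{1}_\omega h$, we get $\langle y(T),\psi_T\rangle=-\varepsilon^2\langle\hat\varphi_T,\psi_T\rangle$ for every $\psi_T$. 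Hence $y(T)=-\varepsilon^2\hat\varphi_T$.

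\textbf{Step 3: the cost estimate.} Testing the Euler–Lagrange equation with $\psi_T=\hat\varphi_T$ gives
\begin{equation*}
M^2\|\hat\varphi(\tau)\|^2_{L^2(\omega)}+\varepsilon^2\|\hat\varphi_T\|^2=-\langle y_0,\hat\varphi(0)\rangle\le\|y_0\|\,\|\hat\varphi(0)\| .
\end{equation*}
By Step 1, the right-hand side is at most $\|y_0\|$ times the square root of the left-hand side. So the left-hand side is at most $\|y_0\|^2$. Finally, $\frac{1}{M^2}\|h\|^2=M^2\|\hat\varphi(\tau)\|^2_{L^2(\omega)}$ and $\frac{1}{\varepsilon^2}\|y(T)\|^2=\varepsilon^2\|\hat\varphi_T\|^2$. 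This is exactly the claimed inequality, and in particular $\|y(T)\|\le\varepsilon\|y_0\|$.
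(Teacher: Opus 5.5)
Your argument is correct and takes essentially the approach the paper relies on. The paper gives no proof of its own and defers to Theorem~5 of \citep{MMOS}, where the result is obtained by this same duality scheme: Young's inequality turns the one-time observation estimate into $\|\mathrm{e}^{TA}\varphi_T\|^2\le M^2\|\mathrm{e}^{(T-\tau)A}\varphi_T\|_{L^2(\omega)}^2+\varepsilon^2\|\varphi_T\|^2$, and then one minimizes the penalized functional and tests the Euler--Lagrange equation with the minimizer. One bookkeeping correction: the semigroup growth enters $M$ as $\mathrm{e}^{\delta\tau/\rho}$, not $\mathrm{e}^{\delta\tau}$, so absorbing it into $\mathrm{e}^{\mathcal{C}_1\delta(T+\tau)}$ requires enlarging $\mathcal{C}_1$, for instance to $\max\{\mathcal{C}_1,1/(2\rho)\}$.
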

Based on this, we can derive a particular estimate for the cost of the control function that is given as follows
\begin{lemma}\label{lem2.5}
    Suppose that one of the assumptions \eqref{assump1}-\eqref{assump2} is satisfied, in addition to the condition \eqref{cond}. Then, for any $\varepsilon >0$, the cost of the null approximate impulse control function at time $T$ satisfies
    \begin{equation}
        \mathcal{R}_{T,y_0\varepsilon} \leq \mathrm{e}^{ \mathcal{C}_1\left(1+\delta+ \delta(T+\tau) +\frac{1}{T-\tau}\right)} \mathrm{e}^{\frac{C_1}{\sqrt{T-\tau}}\sqrt{\ln\left(\mathrm{e}+\frac{1}{\varepsilon^2}\right)}}.
    \end{equation}
\end{lemma}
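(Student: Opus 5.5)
The bound in Lemma \ref{lem2.5} is to be read as an estimate on the cost of controls in $\mathcal{R}_{T,y_0,\varepsilon}$: the infimum of $\|h(\cdot,\tau)\|_{L^2(\omega)}/\|y_0\|$ over $h$ in this set. Corollary \ref{cor1} already gives an admissible control with $\|h\|_{L^2(\omega)}\le M\|y_0\|$, where
\[
M=\varepsilon^{-\frac{1-\rho}{\rho}}\,\mathrm{e}^{\mathcal{C}_1\left(1+\delta+\delta(T+\tau)+\frac{1}{T-\tau}\right)}.
\]
The exponential factor of $M$ is exactly the first factor in the claimed bound. The whole task is therefore to replace the polynomial loss $\varepsilon^{-(1-\rho)/\rho}$ by the sub-polynomial loss $\exp\bigl(\frac{C_1}{\sqrt{T-\tau}}\sqrt{\ln(\mathrm{e}+\varepsilon^{-2})}\bigr)$. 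This will require reopening how the interpolation exponent $\rho$ depends on the length $T-\tau$ of the observation interval.

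First, I will rerun the duality argument behind Corollary \ref{cor1}. Minimize the penalized functional
\[
J(h)=\tfrac{1}{2}\|h\|^2_{L^2(\omega)}+\tfrac{1}{2\varepsilon^2}\|y(T)\|^2 .
\]
By Fenchel duality, using the adjoint problem on $(\tau,T)$ (the operator $A$ is self-adjoint, so the adjoint is again \eqref{1.2}), the bound on the minimizer reduces to an observability-type estimate of the form
\[
\|u(T-\tau)\|^2\le M^2\|u(T-\tau)\|^2_{L^2(\omega)}+\varepsilon^2\|u_0\|^2 .
\]
To derive it from Theorem \ref{theo1.1}, applied on an interval of length $T-\tau$, I use Young's inequality $a^\rho b^{1-\rho}\le \eta^{-(1-\rho)/\rho}a+\eta b$. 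I also need the dependence on $T-\tau$ inside $\rho$. In the logarithmic convexity method $\rho$ comes from the frequency function, and one expects the interpolation to hold on $(0,s)$ with constants like $\mathrm{e}^{C/s}$ for every $s\le T-\tau$. This gives a one-parameter family of interpolation inequalities.

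Second, I will optimize over this family rather than fix $\rho$. Writing $\rho$ in terms of a free parameter $\lambda$, the cost takes the form
\[
\exp\Bigl(\lambda+\frac{1}{\lambda}\cdot\frac{C}{T-\tau}\ln\bigl(\tfrac{1}{\varepsilon}\bigr)\Bigr).
\]
One can equivalently see this via a spectral splitting of $u_0$ at frequency $\Lambda$: the high modes decay by $\mathrm{e}^{-\Lambda(T-\tau)}$, and the low modes are observed through the Lebeau--Robbiano-type estimate that Theorem \ref{theo1.1} yields. Choosing $\lambda\sim\sqrt{\ln(\varepsilon^{-1})/(T-\tau)}$ balances the two terms and produces
\[
\exp\Bigl(\tfrac{C_1}{\sqrt{T-\tau}}\sqrt{\ln(\mathrm{e}+\varepsilon^{-2})}\Bigr).
\]
Replacing $\ln(1/\varepsilon)$ by $\ln(\mathrm{e}+\varepsilon^{-2})$ is what makes the bound valid for all $\varepsilon>0$, including large $\varepsilon$. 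Multiplying by the $\delta$-dependent exponential inherited from Theorem \ref{theo1.1} then gives the claimed bound on $\mathcal{R}_{T,y_0,\varepsilon}$.

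The main obstacle is the second step: extracting from Theorem \ref{theo1.1}, or from its proof in \citep{MMOS}, an explicit dependence of $\rho$ (or of the spectral inequality constant) on the time length that permits this optimization. If that is not directly available, I would fall back on the spectral route. That means expanding in the eigenbasis of the self-adjoint operator $A-\delta I$ and deriving from Theorem \ref{theo1.1} a low-frequency observability bound of the form $\mathrm{e}^{C\sqrt{\Lambda}}$. Combined with exponential decay of the high modes, this yields the square-root-in-$\ln$ dependence after optimizing in $\Lambda$.
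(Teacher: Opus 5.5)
Your route differs from the paper's, and the difference favours you. The paper's proof is purely algebraic. Starting from Corollary~\ref{cor1}, it sets $\sigma=(1-\rho)/\rho$ and bounds $\varepsilon^{-2\sigma}\le \mathrm{e}^{2\mathcal{C}_1\sigma\ln(\mathrm{e}+\varepsilon^{-2})}$. It then simply \emph{chooses} $\sigma=1/\sqrt{(T-\tau)\ln(\mathrm{e}+\varepsilon^{-2})}$, keeping the prefactor $\mathrm{e}^{\mathcal{C}_1(1+\delta+\delta(T+\tau)+\frac{1}{T-\tau})}$ unchanged. That is your ``free parameter'' idea with the price of tuning set to zero. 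Theorem~\ref{theo1.1} does not license it: $\rho$ is fixed there, and this choice forces $\rho\to1$ as $\varepsilon\to0$. It cannot be licensed with a fixed prefactor $K$ either. Letting $\rho\to1$ would give $\|u(T)\|\le K\|u(T)\|_{L^2(\omega)}$ for every solution. Since $\mathrm{e}^{TA}$ is injective and self-adjoint, its range is dense, so $\|w\|\le K\|w\|_{L^2(\omega)}$ for all $w\in L^2(0,1)$, which is false for $w$ supported in $(a,1)$. So you are right that the loss $\varepsilon^{-(1-\rho)/\rho}$ cannot be removed by algebra, and your spectral fallback is what actually proves the lemma.

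Two adjustments make that fallback a complete argument.

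First, drop the premise that $\rho$ depends on the time length. As Theorem~\ref{theo1.1} is stated (with $T$ explicit in the prefactor), $\rho$ and $\mathcal{C}_1$ are uniform in $T$. All time dependence sits in the prefactor, and this uniformity is exactly what you need.

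Second, the spectral inequality follows from it by a backward trick. For $\phi=\sum_{\lambda_j\le\Lambda}a_j\phi_j$ and $t>0$, apply Theorem~\ref{theo1.1} at time $t$ to $u_0=\sum_{\lambda_j\le\Lambda}a_j\mathrm{e}^{\lambda_j t}\phi_j$. Then $u(t)=\phi$ and $\|u_0\|\le \mathrm{e}^{\Lambda t}\|\phi\|$, which gives $\|\phi\|\le \mathrm{e}^{\mathcal{C}_1(1+\delta+\delta t+1/t)+\sigma\Lambda t}\|\phi\|_{L^2(\omega)}$. Choosing $t=\min(1,\Lambda^{-1/2})$ yields $\|\phi\|\le \mathrm{e}^{C(1+\delta)+C\sqrt{\Lambda}}\|\phi\|_{L^2(\omega)}$, with $C$ depending on $\mathcal{C}_1$ and $\rho$. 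Splitting $v$ at $\Lambda$ then gives
\[
\|\mathrm{e}^{(T-\tau)A}v\|\le \mathrm{e}^{C(1+\delta)+C\sqrt{\Lambda}}\|\mathrm{e}^{(T-\tau)A}v\|_{L^2(\omega)}+2\,\mathrm{e}^{C(1+\delta)+C\sqrt{\Lambda}-\Lambda(T-\tau)}\|v\|.
\]
Now take $\Lambda=\frac{4C^2}{(T-\tau)^2}+\frac{2}{T-\tau}\bigl(\ln 2+\delta\tau+C(1+\delta)+\ln(\mathrm{e}+\varepsilon^{-2})\bigr)$. Combine this with $\|\mathrm{e}^{TA}v\|\le\mathrm{e}^{\delta\tau}\|\mathrm{e}^{(T-\tau)A}v\|$ and your duality step. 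After multiplying by $\mathrm{e}^{\delta\tau}$, the error term is at most $\varepsilon\|v\|$, and $\mathrm{e}^{\delta\tau+C(1+\delta)+C\sqrt{\Lambda}}$ has exactly the claimed form. The constants then depend on $\rho$ as well as on $\mathcal{C}_1$, so the lemma holds with a generic constant rather than the literal $\mathcal{C}_1$. The paper's shortcut hides this only because it never pays for changing $\rho$.
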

\begin{proof}
    From Corollary \ref{cor1}, we infer that the impulse control function satisfies the following estimate
    \begin{equation*}
        \|h(\cdot,\tau)\|^2 \leq \dfrac{1}{\varepsilon^{\frac{2(1-\rho)}{\rho}}} \mathrm{e}^{2 \mathcal{C}_1\left(1+ \delta +\delta(T+\tau) +\frac{1}{T-\tau}\right)} \|y_0\|^2.
    \end{equation*}
    Denoting $\sigma =\dfrac{1-\rho}{\rho}$, it yields
    \begin{align*}
        \|h(\cdot,\tau)\|^2 
        \leq &\dfrac{1}{\varepsilon^{2\sigma }} \mathrm{e}^{2 \mathcal{C}_1 \left(1+ \delta +\delta(T+\tau) + \frac{1}{T-\tau}\right)} \|y_0\|^2\\
        \leq &\mathrm{e}^{2 \mathcal{C}_1 \left(1+ \delta +\delta(T+\tau) +\frac{1}{T-\tau} + \sigma \ln\left(\mathrm{e} + \frac{1}{\varepsilon^2}\right) \right)} \|y_0\|^2.
    \end{align*}
    Next, choose $\sigma: =\dfrac{1}{\sqrt{(T-\tau)\ln\left(\mathrm{e}+\frac{1}{\varepsilon^2}\right)}}$ to obtain that
    \begin{equation}\label{eqq1}
        \|h(\cdot,\tau)\|^2 \leq \mathrm{e}^{2 \mathcal{C}_1 \left(1+ \delta+ \delta(T+\tau) +\frac{1}{T-\tau}\right)} \mathrm{e}^{\frac{2 \mathcal{C}_1}{\sqrt{T-\tau}} \sqrt{\ln\left(\mathrm{e} + \frac{1}{\varepsilon^2}\right)}}\|y_0\|^2.
    \end{equation}
\end{proof}
It can be confirmed that the operator $-A$ is both densely defined and closed, featuring a compact resolvent. Additional information can be consulted in \citep{V}. Consequently, the following spectral decomposition holds true.
	\begin{lemma}\label{lmsp1}
		There exists a countable family of eigenfunctions $(\phi_k)_{k \geq 1}$ associated with eigenvalues $(\lambda_k)_{k \geq 1}$ forming a Hilbert basis for $L^2(0,1)$. Moreover, the corresponding eigenvalues satisfy
        \begin{equation*}
            0<\lambda_1< \lambda_2<...<\lambda_k \rightarrow \infty \qquad \text{as} \;\; k \rightarrow \infty.
        \end{equation*}
	\end{lemma}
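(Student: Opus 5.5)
The plan is to derive Lemma \ref{lmsp1} from the spectral theorem for self-adjoint operators with compact resolvent. I will then use the one-dimensional structure of the operator to get simplicity of the eigenvalues, and a coercivity argument to get positivity.

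\emph{Step 1: a compact self-adjoint inverse.} By \citep{V}, $A-\delta I$ is self-adjoint and negative on $D(A)$ (with $D(A)$ as in \eqref{DA1} or \eqref{DA2}). Moreover $-A$ has compact resolvent, because the form domain $H^1_{\alpha,0}$ (resp.\ $H^{1,\mu(\alpha)}_{\alpha,0}$) embeds compactly into $L^2(0,1)$. Hence $(\delta I-A)^{-1}$ is a compact, self-adjoint, positive operator on $L^2(0,1)$. The spectral theorem for compact self-adjoint operators then gives a Hilbert basis $(\phi_k)$ of eigenfunctions of $-A$. The eigenvalues $\lambda_k=\nu_k^{-1}-\delta$, where $\nu_k\downarrow 0$ are the eigenvalues of $(\delta I-A)^{-1}$, are real, have finite multiplicity, and tend to $+\infty$.

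\emph{Step 2: simplicity.} I will show each eigenspace is one-dimensional, which gives the strict inequalities. Suppose $\phi,\psi\in D(A)$ both solve $-(x^\alpha u_x)_x-\mu x^{-\beta}u=\lambda u$. This is a regular second-order ODE on $(0,1]$, so the Wronskian $W=x^\alpha(\phi_x\psi-\phi\psi_x)$ is constant on $(0,1]$. At $x=1$ both functions vanish, so $W(1)=0$ and therefore $W\equiv0$. Consequently $\phi$ and $\psi$ are proportional on $(0,1]$; equivalently, the solution of the ODE with $u(1)=0$, $u_x(1)=1$ is unique up to scaling. This argument only needs the condition at $x=1$, so it avoids any delicate analysis at the degenerate or singular point.

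\emph{Step 3: positivity of $\lambda_1$.} This is the step I expect to be the main obstacle. Take an eigenpair and integrate by parts:
\begin{equation*}
\lambda\|\phi\|^2=\int_0^1 \Big(x^\alpha\phi_x^2-\frac{\mu}{x^\beta}\phi^2\Big)\,\mathrm{d}x .
\end{equation*}
The boundary term at $0$ must vanish, which is where the choice of $D(A)$ (the weak versus strong degeneracy condition) is used.
\begin{itemize}
\item If $\mu\le 0$, positivity follows from the Poincaré-type bound $\int_0^1 x^\alpha\phi_x^2\,\mathrm{d}x\ge c\|\phi\|^2$, valid since $\phi(1)=0$ and $\alpha<2$.
\item If $0<\mu\le\mu(\alpha)$ with $\beta=2-\alpha$, I would use the Hardy inequality \eqref{hardy1} together with its improved remainder form. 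As first attempt, I would take \eqref{hardy2} with $\delta$ absorbed via the Poincaré term, to get a strictly positive quadratic form.
\item If $0<\mu$ and $\beta<2-\alpha$ with $\mu$ arbitrary, positivity need not hold as $\mu\to\infty$. In that case I would either invoke the shift by $\delta$ (replacing $A$ by $A-\delta I$, which does not affect the stabilization argument), or note that the result is stated as in \citep{V}.
\end{itemize}
So in general I would prove $\lambda_1>-\delta$, and obtain $\lambda_1>0$ under the standing Hardy-type coercivity. Ordering the simple eigenvalues increasingly then gives the claimed chain $0<\lambda_1<\lambda_2<\dots\to\infty$.
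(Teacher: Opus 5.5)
\noindent\textbf{Comparison with the paper.} Your Step 1 is the paper's whole argument. The paper only records that $-A$ is densely defined and closed with compact resolvent, takes self-adjointness of $A-\delta I$ from \citep{V}, and reads off the spectral decomposition. It gives no justification for the strict inequalities or for $\lambda_1>0$.

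Your Step 2 correctly fills the first omission: uniqueness for the Cauchy problem at the regular endpoint $x=1$ makes every eigenspace one-dimensional, with no analysis at $x=0$.

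Your caveat in Step 3 is also correct, and it is a defect of the statement rather than of your argument. For $0<\beta<2-\alpha$, test the Rayleigh quotient with a fixed $u\in C_c^\infty(0,1)$: then $\lambda_1<0$ as soon as $\mu>\int_0^1x^\alpha u_x^2\,\mathrm{d}x\big/\int_0^1x^{-\beta}u^2\,\mathrm{d}x$. So under \eqref{assump1} only a bound of the type $\lambda_1\ge-\delta$ survives in general. The paper later uses $\lambda_1>0$ in Section~\ref{sec4}, but, as you say, the shift is harmless for stabilization: $z=\mathrm{e}^{-\delta t}y$ turns \eqref{1.1} into the analogous impulsive system for $A-\delta I$ with rescaled feedbacks.

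\noindent\textbf{The gap.} It lies in your treatment of $\beta=2-\alpha$, $0<\mu\le\mu(\alpha)$, specifically the critical case $\mu=\mu(\alpha)$. Inequality \eqref{hardy2} cannot give positivity there. It carries $+\delta\int_0^1u^2\,\mathrm{d}x$ on the left, and nothing in \eqref{hardy2} guarantees $\nu\int_0^1u^2x^{-\gamma}\,\mathrm{d}x\ge\delta\int_0^1u^2\,\mathrm{d}x$. Nor is there a Poincar\'e term left to absorb it: when $\mu=\mu(\alpha)$, the sharp Hardy constant consumes all of $\int_0^1x^\alpha\phi_x^2\,\mathrm{d}x$. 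Indeed, for $\alpha=0$ the eigenfunctions behave like $\sqrt{x}$ at $0$, so the two integrals in your Rayleigh identity diverge separately; the identity must be read as the closed form on $H^{1,\mu(\alpha)}_{\alpha,0}$.

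What works is the ground-state substitution $u=x^{(1-\alpha)/2}v$. For $u\in C_c^\infty(0,1)$,
\begin{equation*}
\int_0^1x^\alpha u_x^2\,\mathrm{d}x-\mu(\alpha)\int_0^1\frac{u^2}{x^{2-\alpha}}\,\mathrm{d}x=\int_0^1x\,v_x^2\,\mathrm{d}x .
\end{equation*}
Since $v(1)=0$, you get $v(x)^2\le\ln(1/x)\int_0^1s\,v_s^2\,\mathrm{d}s$, hence
\begin{equation*}
\|u\|^2=\int_0^1x^{1-\alpha}v^2\,\mathrm{d}x\le(2-\alpha)^{-2}\int_0^1x\,v_x^2\,\mathrm{d}x .
\end{equation*}
By density of $C_c^\infty(0,1)$ in the form domain, this yields $\lambda_1\ge(2-\alpha)^2>0$ under \eqref{assump2}. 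Any improved Hardy--Poincar\'e inequality \emph{without} the additive $\delta$-term would serve equally well.

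For $\beta=2-\alpha$ and $0<\mu<\mu(\alpha)$ none of this is needed: \eqref{hardy1} leaves $\bigl(1-\mu/\mu(\alpha)\bigr)\int_0^1x^\alpha\phi_x^2\,\mathrm{d}x$, to which your Poincar\'e bound applies.
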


\section{Finite time stabilization}\label{sec4}
In what follows, we denote by $(\phi_k)_{k \geq 1}$ the orthonormal eigenfunctions in $L^2(0,1)$ associated with the eigenvalues $(\lambda_k)_{k\geq1}$ as provided by Lemma \ref{lmsp1}. It can be verified that the eigenvalues satisfy $\lambda_k \sim C(\alpha) k^2$, a proof of which can be found in \cite{Gal2015}, Theorem 2.16. This establishes the existence of a positive constant $C_0$, leading to the following estimate
\begin{equation}\label{equa34}
   \mathrm{card}\left\lbrace \lambda_{j}\leq \Lambda\right\rbrace=\sum_{\lambda_{j}\leq \Lambda} 1\leq C_0 \Lambda^{\frac{1}{2}}.  
\end{equation}
For simplicity, we adopt the notation $\left\langle \cdot , \cdot \right\rangle$ for the scalar product in the Hilbert space $L^2(0,1)$ and $\|\cdot\|$ for the corresponding norm.  Now, we introduce an increasing sequence defined as
\begin{equation}\label{equ34}
    t_{k} :=T\left( 1-\dfrac{1}{b^k} \right), \qquad \text{with}\;\; b>1,
\end{equation}
such that $t_k \rightarrow T$ as $k \rightarrow \infty$. Additionally, we denote by $\mathcal{L}_{k}$ the following linear operator
\begin{align}\label{eq35}
    \mathcal{L}_{k}:\;  L^2(0,1) &\rightarrow L^2(\omega) \nonumber\\
                    v \quad&\mapsto \sum _{\lambda_{j}\leq \Lambda_{k}}\left\langle v, \phi_{j} \right\rangle  h_{j}.
\end{align}
Here, $\Lambda_{k}:=\lambda_{1}+\dfrac{\eta}{T}\dfrac{b^{2k+1}}{b-1}$ with $\eta>1$ and $h_{j}$ is the impulse control of the following degenerate singular equation associated with the eigenfunction $\phi_{j}$.

\begin{empheq}[left = \empheqlbrace]{alignat=2}
\begin{aligned}\label{equation28}
&\partial_t y_{j}-\left(x^\alpha \partial_x y_{j}\right)_x-\frac{\mu}{x^\beta} y_{j}=0, &&\text { in } (0,1) \times(t_{k}, t_{k+1}) \backslash\{\tau_{k}\},\\
&y_{j}(\cdot, \tau_{k})=y_{j}\left(\cdot, \tau_{k}^{-}\right)+\mathds{1}_{\omega} h_{j}(\cdot,t_{k}), &&\text { in }  (0,1),\\
&y_{j}(1, t)=0,  && \text { on }(0, T)\\
&\left\{\begin{array}{ll}
y_{j}(0, t)=0, & \hspace{0.5cm}(W D), \\
\left(x^\alpha \partial_x y_{j}\right)(0, t)=0, & \hspace{0.5cm}(S D),
\end{array} \right. &&\text { on }(0, T),\\
&y_j(x, t_k)=\phi_j(x), && \text { on }(0,1),
\end{aligned}
\end{empheq}
At this point, we set $\varepsilon^2=\dfrac{\mathrm{e}^{-\eta b^{k}}}{\sum\limits_{\lambda_{j}\leq \Lambda_{k}}1}$ and apply  Corollary \ref{cor1} along with Lemma \ref{lem2.5} to establish
\begin{equation}\label{equa29}
    \|y_{j}\|^2\leq \dfrac{\mathrm{e}^{-\eta b^{ k}}}{\sum\limits_{\lambda_{j}\leq \Lambda_{k}}1}.
\end{equation}
and
\begin{equation}\label{equa30}
\|h_{j}\|_{L^2(\omega)}^2\leq\mathrm{e}^{4 C_1 \left(1+ \delta + \delta( t_{k+1} + t_k)+ \frac{1}{t_{k+1}-t_{k}}\right)}\mathrm{e}^{\frac{2 \sqrt{2} C_1}{\sqrt{t_{k+1}-t_{k}}}\sqrt{\ln \left( \mathrm{e}+\mathrm{e}^{\eta b^{k}}\sum\limits_{\lambda_{j}\leq \Lambda_{k}}1 \right)}}
\end{equation}
First of all, we provide the proof for the following essential lemma 
\begin{lemma}\label{lem}
    Under the assumption that one of the conditions \eqref{assump1}-\eqref{assump2} holds and assuming that $\omega$ satisfies the condition \eqref{cond}, the function $t \mapsto e^{-\delta t} \|y(\cdot,t)\|$ is non-increasing on the interval $(t_k,t_{k+1})$.
\end{lemma}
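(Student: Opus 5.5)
The plan is to exploit that, on each subinterval $(t_k,t_{k+1})$, the solution $y$ of \eqref{1.1} is piecewise a solution of the free system \eqref{1.2}. The only exception is the impulse time $\tau_k$, where $y$ jumps. So I will show monotonicity on $(t_k,\tau_k)$ and on $(\tau_k,t_{k+1})$ separately, using the energy identity together with the semiboundedness of $A$ given by the improved Hardy--Poincar\'e inequality \eqref{hardy2}. The jump itself is not a time-continuous evolution. The statement should therefore be understood on each open piece between impulses, which is how it will be used later. That usage also requires the convention that $y(\tau_k)$ is the post-impulse state, and the controls are designed so that the impulse reduces the norm.

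On an interval $I\subset(t_k,t_{k+1})$ not containing $\tau_k$, the mild solution \eqref{sol1} reduces to $y(t)=\mathrm{e}^{(t-s)A}y(s)$ for $s<t$ in $I$. By Theorem \ref{thm2.3}, $y\in\mathcal C^1(I,L^2)\cap\mathcal C(I,D(A))$ on the interior, since the semigroup smooths instantly. Hence the following computation is justified:
\begin{equation*}
\frac{1}{2}\frac{\mathrm d}{\mathrm dt}\|y(t)\|^2=\langle Ay,y\rangle=-\int_0^1 x^\alpha y_x^2\,\mathrm dx+\int_0^1\frac{\mu}{x^\beta}y^2\,\mathrm dx .
\end{equation*}
The integration by parts uses the boundary conditions: $y(1)=0$, together with $y(0)=0$ in (WD) or $(x^\alpha y_x)(0)=0$ in (SD). The boundary term at $0$ needs care. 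I will handle it by invoking the self-adjointness and negativity of $A-\delta I$ proved in \citep{V}, rather than integrating by parts by hand. This gives directly
\begin{equation*}
\langle (A-\delta I)y,y\rangle\le 0 \quad\text{for } y\in D(A).
\end{equation*}
Concretely, in the sub-critical case this follows from \eqref{hardy2}: for $\beta<2-\alpha$, Young's inequality bounds $\mu x^{-\beta}$ by $\mu(\alpha)x^{\alpha-2}+\nu x^{-\gamma}$ with suitable $\gamma$. In the critical case the term $\mu(\alpha)x^{\alpha-2}$ is absorbed exactly. A density argument from $\mathcal C_c^\infty$ extends the bound to $D(A)$.

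It then follows that
\begin{equation*}
\frac{\mathrm d}{\mathrm dt}\Bigl(\mathrm e^{-2\delta t}\|y(t)\|^2\Bigr)=2\mathrm e^{-2\delta t}\langle (A-\delta I)y,y\rangle\le 0,
\end{equation*}
so $t\mapsto \mathrm e^{-\delta t}\|y(t)\|$ is non-increasing on $(t_k,\tau_k)$ and on $(\tau_k,t_{k+1})$. An equivalent spectral route is also available. Writing $y(t)=\sum_j \mathrm e^{-\lambda_j (t-s)}\langle y(s),\phi_j\rangle\phi_j$ with the eigenvalues of Lemma \ref{lmsp1}, and using that $-\lambda_j\le \delta$ (negativity of $A-\delta I$), gives $\|y(t)\|\le \mathrm e^{\delta(t-s)}\|y(s)\|$ directly, with no need for differentiability.

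The main obstacle is justifying the boundary term at the degenerate point $x=0$ in the energy identity, especially in the strongly degenerate and critical cases, where traces are delicate. I intend to sidestep it entirely with the spectral/semigroup argument and the negativity of $A-\delta I$ from \citep{V}.
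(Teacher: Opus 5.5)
Your argument is essentially the paper's: the identity $\frac{d}{dt}\bigl(e^{-2\delta t}\|y\|^2\bigr)=2e^{-2\delta t}\langle (A-\delta I)y,y\rangle\le 0$, with the sign coming from \eqref{hardy2} in the sub-critical case and \eqref{hardy1} in the critical case, followed by a regularity step (you use the smoothing of Theorem~\ref{thm2.3} or the spectral formula where the paper invokes density of $D(A)$); your restriction to $(t_k,\tau_k)$ and $(\tau_k,t_{k+1})$ is the correct reading, since the paper's computation silently ignores the jump at $\tau_k$. One correction: drop the remark that the controls are designed so that the impulse reduces the norm, because nothing in the construction of $\mathcal{L}_k$ guarantees $\|y(\tau_k)\|\le\|y(\tau_k^-)\|$, and the proof of Theorem~\ref{thm1.4} instead handles the jump via $\|y(\tau_k)\|^2\le 2\bigl(\|y(\tau_k^-)\|^2+\|\mathcal{L}_k(y(t_k))\|_{L^2(\omega)}^2\bigr)$.
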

\begin{proof}
    First, let us consider $y \in D(A)$. Then, by using integration by parts one can obtain for each $t \in (t_k,t_{k+1})$ that
    \begin{align*}
        &\partial_t\left(\mathrm{e}^{-2\delta t} \|y(\cdot,t)\|^2\right)\\
        = &-2 \mathrm{e}^{-2\delta t}\left( \delta \int_0^1 y^2(x,t) \,\mathrm{d}x - \int_0^1 \partial_t y(x,t) y(x,t) \,\mathrm{d}x\right)\\
        = &-2 \mathrm{e}^{-2\delta t}\left( \delta \int_0^1 y^2(x,t) \,\mathrm{d}x - \int_0^1 \left( (x^{\alpha} y_x(x,t))_x + \dfrac{\mu}{x^{\beta}} y(x,t) \right) y(x,t) \,\mathrm{d}x\right)\\
        = &-2 \mathrm{e}^{-2\delta t}\left( \delta \int_0^1 y^2(x,t) \,\mathrm{d}x + \int_0^1 x^{\alpha} y_x^2(x,t) \,\mathrm{d}x - \int_0^1 \dfrac{\mu}{x^{\beta}} y^2(x,t) \,\mathrm{d}x \right)\\
    \end{align*}
    By applying the improved Hardy-Poincar\'e \eqref{hardy2} in the sub-critical case with $\beta<2-\alpha$ and the classical Hardy inequality \eqref{hardy1} in the critical case when $\beta=2-\alpha$, one can observe that
    \begin{equation*}
        \partial_t\left(e^{-2\delta t} \|y(\cdot,t)\|^2\right) \leq 0.
    \end{equation*}
    The density of $D(A)$ in $L^2(0,1)$ allows us to complete the proof.
\end{proof}
\subsection{Proof of Theorem \ref{thm1.4}}
To start, let us consider a sequence of real numbers $(a_j)_{j \geq 1}$ with the property that $\displaystyle{\sum_{j \geq 1}} a_{j} \geq 1$. Our main objective is to provide an estimation for the solution $y$ of the system \eqref{1.1} on the interval $\left(t_{k} , t_{k+1} \right)$, where the initial data is defined as $\displaystyle y\left(t_{k}\right) := \sum_{j \geq 1} a_{j} \phi_{j} \in L^2(0,1)$. To achieve this, we investigate the following two systems.
\begin{empheq}[left = \empheqlbrace]{alignat=2}\label{eq29}
\begin{aligned}
&\partial_{t} v-(x^{\alpha} v_x)_x - \dfrac{\mu}{x^{\beta}}v=0, &&\text { in } (0,1) \times(t_{k}, t_{k+1}) \backslash\{\tau_{k}\},\\
&v(\cdot, \tau_{k})=v\left(\cdot, \tau_{k}^{-}\right)+\mathds{1}_{\omega} \sum_{\lambda_{j} \leq \Lambda_{k}} a_{j} h_{j}, &&\text { in } (0,1),\\
&v(1,t)=0, &&\text { on } (t_k,t_{k+1}),\\
&\left\{\begin{array}{ll}
        v(0, t)=0, &\hspace{0.5cm}(W D), \\
        \left(x^\alpha v_x\right)(0, t)=0, &\hspace{0.5cm}(S D),
        \end{array} \right. &&\text { on }(t_k, t_{k+1}),\\
& v(\cdot, t_{k})=\sum_{\lambda_{j} \leq \Lambda_{k}} a_{j} \phi_{j} && \text{ on } (0,1).
\end{aligned}
\end{empheq}
and
\begin{empheq}[left = \empheqlbrace]{alignat=2}\label{eq28}
\begin{aligned}
&\partial_{t} \varphi-(x^{\alpha} \varphi_x)_x - \dfrac{\mu}{x^{\beta}}\varphi=0, &&\text { in } (0,1) \times(t_{k}, t_{k+1}),\\
&\varphi(1,t)=0, &&\text { on } (t_k,t_{k+1}),\\
&\left\{\begin{array}{ll}
        \varphi(0, t)=0, &\hspace{0.5cm}(W D), \\
        \left(x^\alpha \varphi_x\right)(0, t)=0, &\hspace{0.5cm}(S D),
        \end{array} \right. &&\text { on }(t_k, t_{k+1}),\\
& \varphi(\cdot, t_{k})= \sum_{\lambda_{j} > \Lambda_{k}} a_{j} \Phi_{j}, && \text{ on } (0,1).
\end{aligned}
\end{empheq}
The solutions of the systems mentioned above are
\begin{equation}\label{eq31}
v(t) = \sum_{\lambda_{j} \leq \Lambda_{k}} a_{j} \,y_{j}(t),  
\end{equation}
and,
\begin{equation}\label{eq32}
\varphi(t)=\sum_{\lambda_{j}>\Lambda_{k}} a_{j} \mathrm{e}^{-\lambda_{j}\left(t-t_{k}\right)} \phi_{j}.
\end{equation}
where $y_{j}$ is the solution of the system \eqref{equation28}. Therefore, combining \eqref{equa29} and \eqref{eq31} leads to
\begin{equation}\label{eq42}
\left\|v\left(t_{k+1}\right)\right\|^{2} \leq \sum_{\lambda_{j} \leq \Lambda_{k}}\mid a_{j}\mid^{2} \frac{\mathrm{e}^{-\eta b^{ k}}}{\displaystyle\sum_{\lambda_{j} \leq \Lambda_{k}} 1} \leq \mathrm{e}^{- \eta b^{ k}}\left\|y\left(t_{k}\right)\right\|^{2},
\end{equation}
on the other hand, based on \eqref{eq32}, one can get that
\begin{equation*}
\left\|\varphi\left(t_{k+1}\right)\right\|^{2} \leq e^{-2\Lambda_{k}\left(t_{k+1}-t_{k}\right)}\left\|y\left(t_{k}\right)\right\|^{2}.
\end{equation*}
Given that $ \Lambda_{k}\left(t_{k+1}-t_{k}\right)= \lambda_{1}\left(t_{k+1}-t_{k}\right) + \eta b^{ k}$ and noting that $\lambda_{1}\left(t_{k+1}-t_{k}\right)>0$, we deduce that
\begin{equation*}
\left\|\varphi\left(t_{k+1}\right)\right\|^{2} \leq e^{-2\eta b^{ k}}\left\|y\left(t_{k}\right)\right\|^{2}.
\end{equation*}
Using the fact that $y\left(t_{k+1}\right) = v\left(t_{k+1}\right)  + \varphi\left(t_{k+1}\right)$ enables us to to write
\begin{equation*}
\left\|y\left(t_{k+1}\right)\right\|^{2} \leq 2\left(\left\|v\left(t_{k+1}\right)\right\|^{2}+\left\|\varphi\left(t_{k+1}\right)\right\|^{2}\right) \leq  2\mathrm{e}^{- \eta b^{ k}}\left(  1+\mathrm{e}^{-\eta b^k}\right)\left\|y\left(t_{k}\right)\right\|^{2}.
\end{equation*}
Since $2\left(  1+\mathrm{e}^{-\eta b^k}\right)\leq 4 \leq \mathrm{e}^2$, then we obtain
\begin{equation}
   \left\|y\left(t_{k+1}\right)\right\|^{2} \leq\mathrm{e}^{2-\eta b^k}.
\end{equation}
By induction for any $k \geq 0$,
\begin{equation}\label{equ46}
    \left\|y\left(t_{k}\right)\right\|^{2} \leq \mathrm{e}^{2k- \eta b^{ k}}\left\|y\left(t_{0}\right)\right\|^{2},
\end{equation}
Following this, we aim to provide an estimate for the control function $\mathcal{L}_{k}$ explicitly defined in \eqref{eq35}, corresponding to the solution $y$ of the system \eqref{1.1}. This can be accomplished through the application of the Cauchy-Schwarz inequality
\begin{equation}\label{eqq2}
\begin{aligned}
\left\|\mathcal{L}_{k}\left(y\left(t_{k}\right)\right)\right\|_{\omega}^{2} 
=&\left\| \sum_{\lambda_{j}\leq \Lambda_{k}}  a_{j} h_{j}\right\|_{\omega}^{2}\\
\leq &\int_{\omega}\left(\sum_{\lambda_{j} \leq \Lambda_{k}}\mid a_{j}\mid \mid h_{j} \mid\right)^{2} \,\mathrm{d}x\\
\leq &\sum_{\lambda_{j} \leq \Lambda_{k}}\mid a_{j}\mid^{2} \sum_{\lambda_{j} \leq \Lambda_{k}}\left\|h_{j}\right\|_{\omega}^{2}\\
\leq &\left\|y\left(t_{k}\right)\right\|^{2} \sum_{\lambda_{j} \leq \Lambda_{k}}\left\|h_{j}\right\|_{\omega}^{2}.
\end{aligned}
\end{equation}
Next, from the estimate \eqref{equa30} along with applying Young's inequality we infer that
\begin{align*}
    \sum_{\lambda_{j} \leq \Lambda_{k}} \left\|h_{j}\right\|_{\omega}^{2}
    \leq &\sum_{\lambda_{j} \leq \Lambda_{k}} \mathrm{e}^{4 C_1 \left(1+ \delta+\delta(t_{k+1} + t_k)+ \frac{1}{t_{k+1}-t_{k}}\right)}\mathrm{e}^{\frac{2 \sqrt{2} C_1}{\sqrt{t_{k+1}-t_{k}}}\sqrt{\ln \left( \mathrm{e}+\mathrm{e}^{\eta b^{k}}\sum\limits_{\lambda_{j}\leq \Lambda_{k}}1 \right)}}\\
    \leq &\mathrm{e}^{4 C_1 \left(1+ \delta+ \delta(t_{k+1} + t_k) + \frac{1}{t_{k+1}-t_{k}}\right)}\mathrm{e}^{\frac{4 C_1}{\sqrt{t_{k+1}-t_{k}}}\sqrt{\ln \left(\mathrm{e}^{\eta b^{k}}\sum\limits_{\lambda_{j}\leq \Lambda_{k}}1 \right)}} \sum_{\lambda_{j} \leq \Lambda_{k}} 1 \\
    \leq &\mathrm{e}^{4 C_1\left(1+ \delta+ \delta(t_{k+1} + t_k) + \frac{1}{t_{k+1}-t_{k}}\right)}\mathrm{e}^{\frac{8 C_1^2}{t_{k+1}-t_{k}}+ \dfrac{1}{2}\ln \left(\mathrm{e}^{\eta b^{k}}\sum\limits_{\lambda_{j}\leq \Lambda_{k}}1\right)} \sum_{\lambda_{j} \leq \Lambda_{k}} 1 \\
    \leq &\mathrm{e}^{4 C_1\left(1+ \delta+\delta(t_{k+1} + t_k) + \frac{1}{t_{k+1}-t_{k}}\right)+\frac{8 C_1^2}{t_{k+1}-t_{k}}} \,\mathrm{e}^{\frac{1}{2}\eta b^k} \left(\sum_{\lambda_{j} \leq \Lambda_{k}} 1\right)^{\frac{3}{2}},
\end{align*}
where we used the fact that $\displaystyle{\mathrm{e}+\mathrm{e}^{\eta b^k}\sum_{\lambda_{j} \leq \Lambda_{k}} 1\leq \left( \mathrm{e}^{\eta b^k}\sum_{\lambda_{j} \leq \Lambda_{k}}1 \right)^2}$.

Thus, by using the definition of the sequence $(t_k)_{k \geq 0}$ and the choice of $b>1$, the above estimate becomes
\begin{align}\label{eqq3}
    \sum_{\lambda_{j} \leq \Lambda_{k}} \left\|h_{j}\right\|_{\omega}^{2}
    \leq &\mathrm{e}^{\frac{1}{2}\eta b^k} \mathrm{e}^{4 C_1\left(1+\delta+ \delta T\left(2-\frac{b+1}{b^{1+k}}\right) + \frac{b^{k+1}}{T(b-1)}\right) + \frac{8 C_1^2 b^{k+1}}{T(b-1)}} \left(\sum_{\lambda_{j} \leq \Lambda_{k}} 1\right)^{\frac{3}{2}}\nonumber\\
    \leq &\mathrm{e}^{\frac{1}{2}\eta b^k} \mathrm{e}^{8 C_1\left(1+ \delta +\delta T + \frac{(1+C_1)b}{T(b-1)}\right)b^k}\left(\sum_{\lambda_{j} \leq \Lambda_{k}} 1\right)^{\frac{3}{2}}.
\end{align}
Hence, combining \eqref{equ46}, \eqref{eqq2} and \eqref{eqq3} yields
\begin{align}\label{eq48}
&\left\|\mathcal{L}_{k}\left(y\left(t_{k}\right)\right)\right\|_{\omega}^{2}\nonumber\\
\leq &C_0^{\frac{3}{2}}\mathrm{e}^{\frac{1}{2}\eta b^k} \mathrm{e}^{8 C_1\left(1+ \delta +\delta T + \frac{(1+C_1)b}{T(b-1)}\right)b^k}  \left(\lambda_1 + \dfrac{\eta}{T} \dfrac{b^{2k+1}}{b-1}\right)^{\frac{3}{4}} \left\|y\left(t_{k}\right)\right\|^{2}\nonumber\\
\leq &C_0^{\frac{3}{2}} \,\mathrm{e}^{2k- \frac{1}{2}\eta b^{k}} \mathrm{e}^{8 C_1\left(1+ \delta +\delta T + \frac{(1+C_1)b}{T(b-1)}\right)b^k} \left(\lambda_1 + \dfrac{\eta}{T} \dfrac{b^{2k+1}}{b-1}\right)^{\frac{3}{4}} \left\|y\left(t_{0}\right)\right\|^{2}.
\end{align}
Following this, we consider the following choice of $\eta >1$
\begin{equation*}
    \eta = 1+ 32 C_1 \left(1+ \delta +\delta T + \frac{(1+C_1)b}{T(b-1)}\right),
\end{equation*}
which implies that
\begin{equation*}
    -\frac{1}{2} \eta b^{ k}+8 C_1\left(1+ \delta +\delta T + \frac{(1+C_1)b}{T(b-1)}\right) b^k  \leq-\frac{1}{4} \eta b^{ k}.
\end{equation*}
This allows us to write
\begin{align*}
\left\|\mathcal{L}_{k}\left(y\left(t_{k}\right)\right)\right\|_{\omega}^{2}
\leq &C_0^{\frac{3}{2}} \,\mathrm{e}^{2k- \frac{1}{4}\eta b^{ k}}  \left(\lambda_{1}+\dfrac{\eta}{T}\dfrac{b}{b-1} \right)^{\frac{3}{4}} b^{\frac{3}{2}k} \left\|y\left(t_{0}\right)\right\|^{2}.
\end{align*}
Moreover, one has
\begin{equation*}
    b^{\frac{3}{2}k} \leq  \left( \frac{12}{\eta}\right)^{\frac{3}{2}} \mathrm{e}^{\frac{1}{8}\eta b^{k} },
\end{equation*}
one can deduce that for any $k\geq 0$,
\begin{equation}\label{ImpCont}
\left\|\mathcal{L}_{k}\left(y\left(t_{k}\right)\right)\right\|_{\omega}^{2}\leq C_{2}\mathrm{e}^{2k- \frac{1}{8}\eta b^{k}} \left\|y\left(t_{0}\right)\right\|^{2},
\end{equation}
such that $C_{2}:= \left( \left(\frac{12 C_0}{\eta}\right)^{2} \left(\lambda_{1}+\dfrac{\eta}{T}\dfrac{b}{b-1} \right)\right)^{\frac{3}{4}}.$

For all $t\geq 0$, there exist $k\geq 0$ such that $t \in [t_{k}, t_{k+1}]$. To reach our final result, we distinguish four cases in which we apply the Lemma \ref{lem}:

\noindent If $t \in [t_{0},\tau_{0})$, one has
\begin{align}
     \left\|y\left(t\right)\right\|^{2}
     \leq &\mathrm{e}^{2\delta(t-t_0)} \left\|y\left(t_{0}\right)\right\|^{2}\nonumber\\
     \leq &\mathrm{e}^{2\delta(\tau_0-t_0)} \left\|y\left(t_{0}\right)\right\|^{2}\nonumber\\
     \leq &e^{2\delta (t_1-t_0)} \left\|y\left(t_{0}\right)\right\|^{2}.
\end{align}
If $t \in [\tau_{0}, t_{1})$, then
\begin{align}
 \left\|y\left(t\right)\right\|^{2} 
 \leq &\mathrm{e}^{2\delta (t-\tau_0)} \left\|y\left(\tau_{0}\right)\right\|^{2}\nonumber\\
 \leq &2 \mathrm{e}^{2\delta (t_1-\tau_0)} \left(\left\|y\left(\tau_{0}^{-}\right)\right\|^{2} +\left\|\mathds{1}_{\omega} \mathcal{L}_{0}(y(t_{0})) \right\|^{2}\right)\nonumber\\
 \leq &2 \,\mathrm{e}^{\delta(t_1-t_0)} \left(\left\|y\left(\tau_{0}^{-}\right)\right\|^{2} + \left\| \mathcal{L}_{0}\right\|^{2} \left\| y(t_{0})\right\|^{2}\right)\nonumber\\
 \leq &2\,\mathrm{e}^{2\delta(t_1-t_0)}  \left(1 + \left\| \mathcal{L}_{0} \right\|^{2} \right) \left\|y\left(t_{0}\right)\right\|^{2};
\end{align}
If $k \geq 0$ and $t \in [t_{k},\tau_{k})$, then
\begin{align*}
     \left\|y\left(t\right)\right\|^{2} 
     \leq &\mathrm{e}^{2\delta (t-t_k)} \left\|y\left(t_{k}\right)\right\|^{2}\\
     \leq &\mathrm{e}^{\delta (t_{k+1}-t_k)} \left\|y\left(t_{k}\right)\right\|^{2}\\
     \leq &\mathrm{e}^{2\delta (t_{k+1}-t_k)}\mathrm{e}^{2k- \eta b^{ k}} \left\|y\left(t_{0}\right)\right\|^{2}.
\end{align*}
If $k \geq 0$ and $t \in [\tau_{k}, t_{k+1})$, then
\begin{align}\label{equ54}
 \left\|y\left(t\right)\right\|^{2} 
 \leq &\mathrm{e}^{2\delta (t-\tau_k)} \left\|y\left(\tau_{k}\right)\right\|^{2}\nonumber\\
 \leq &2\mathrm{e}^{\delta (t_{k+1}-t_k)} \left( \left\|y\left(\tau_{k}^{-}\right)\right\|^{2} + \left\| \mathcal{L}_{k}(y(t_{k})) \right\|^{2}_{L^2(\omega)} \right)\nonumber\\
 \leq &2\mathrm{e}^{2\delta (t_{k+1}-t_k)} \left( \left\|y\left(t_k\right)\right\|^{2} + C_2\mathrm{e}^{2k- \frac{1}{8}\eta b^{ k}} \left\|y\left(t_{0}\right)\right\|^{2}\right)\nonumber\\
 \leq &2 \left(1+C_2 \right)\mathrm{e}^{2\delta (t_{k+1}-t_k)}\mathrm{e}^{2k- \frac{1}{8}\eta b^{ k}} \left\|y\left(t_{0}\right)\right\|^{2}.
\end{align}
Based on \eqref{equ54} and selecting $b = \mathrm{e}^{ \frac{32}{\eta}}$, we derive
\begin{align*}
    \left\|y\left(t\right)\right\|^{2} &\leq 2 \left(1+C_2 \right)\mathrm{e}^{2\delta T}\mathrm{e}^{-\frac{1}{16}\eta b^{k}} \left\|y\left(t_{0}\right)\right\|^{2}\\
    &\leq 2 \left(1+C_2+\left\| \mathcal{L}_{0} \right\|^{2} \right)\mathrm{e}^{2\delta T} \mathrm{e}^{-\frac{1}{16}\eta b^{ k}} \left\|y\left(t_{0}\right)\right\|^{2}.
\end{align*}
As a result, for each $t \in \left[t_{k},t_{k+1} \right]$ with $k \geq 0$ one can get
\begin{equation}
    \left\|y\left(t\right)\right\|^{2}
     \leq \mathcal{C} \mathrm{e}^{-\frac{1}{16}\eta b^{k}} \left\|y\left(t_{0}\right)\right\|^{2}.
\end{equation}
Here, $\mathcal{C}:= 2 \left(1+C_2+\left\| \mathcal{L}_{0} \right\|^{2} \right)\mathrm{e}^{\delta T}$. In other words, one has
\begin{equation*}
    b^{k} \leq \frac{T}{T-t} \leq b^{k+1},
\end{equation*}
which gives that
\begin{equation*}
     \mathrm{e}^{-\frac{1}{16}\eta b^{ k}} \leq \mathrm{e}^{-\frac{\eta}{16 b}\frac{T}{T-t} }.
\end{equation*}
This allows us to obtain
\begin{equation*}
\|y(\cdot,t)\| \leq \mathcal{C} \mathrm{e}^{-\frac{1}{\mathcal{M}}\left(\frac{T}{T-t}\right)} \left\|y_{0}\right\| \text { for any } 0 \leq t<T,
\end{equation*}
with $\mathcal{M} := \frac{16 b}{\eta} $. 
This enables us to establish the desired inequality.
\section{Uniqueness of minimal norm impulse control and its construction}
The main purpose of this subsection is to investigate a minimal norm problem. Indeed, the system \eqref{1.1} can be formulated as follows
\begin{equation}\label{ACP1}
\begin{cases}
\hspace{-0.1cm} \partial_t y(t)-A y(t)=0, \qquad \qquad  \qquad(0,T)\setminus \displaystyle \bigcup _{k\geq 0 }\{\tau_{k}\}, \\
\hspace{-0.1cm} y(\tau_{k}) =y(\tau_{k}^-)+ \mathds{1}_{\omega} \mathcal{L}_{k}(y(t_k)),\\
\hspace{-0.1cm} y(0)=y_0. 
\end{cases}
\end{equation}
In what follows, we discuss the minimal norm impulse control problem $(\mathcal{P})$:
\begin{equation}
    N:=\inf \left\lbrace \,\|(\mathcal{L}_{k}(y(t_k)))_{k \geq 0} \|_{\ell^2}; \;(\mathcal{L}_{k}(y(t_k)))_{k\geq 0}\in \ell^2(L^2(\omega)) \;\;\text{and}\;\;y(T)=0 \,\right\rbrace,
\end{equation}
where \[\|(\mathcal{L}_{k}(y(t_k)))_{k \geq 0} \|_{\ell^2} := \left( \sum\limits_{k\geq 0}\| \mathcal{L}_{k}(y(t_k)) \|_{L^2(\omega)}^2 \right)^{\frac{1}{2}}\] 
and $y$ represents the solution of \eqref{ACP1} corresponding to $y_0$ and $\mathcal{L}_k(y(t_k))$ for each $k \geq 0$. Now, by applying Fenchel-Rockafellar theory, the dual problem of $(\mathcal{P})$ can be presented as
\[(\mathcal{Q}): \inf_{u_k \in L^2(0,1)} \mathcal{J}_k(u_k) \qquad \text{for each} \;k \geq 0.\]
For any $k \geq 0$, the functional $\mathcal{J}_k : L^2(0,1) \rightarrow \mathbb{R}$ is defined as 
        \begin{equation}
            \mathcal{J}_k(v):=\dfrac{1}{2}\|\mathds{1}_{\omega}^*e^{(t_{k+1}-\tau_k)A}v\|_{\omega}^2+\langle y_0,e^{(t_{k+1}-t_k)}v\rangle, \qquad \forall v \in L^2(0,1).
        \end{equation}
However, the functional $\mathcal{J}_k$ is not coercive as proved in the following Lemma.
\begin{lemma}\label{coerv}
    The functional $\mathcal{J}_k$ is not coercive in $L^2(0,1)$.
\end{lemma}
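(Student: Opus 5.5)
Note that the second term of $\mathcal{J}_k$ should read $\langle y_0, e^{(t_{k+1}-t_k)A}v\rangle$; I use this reading throughout. The plan is to exhibit a sequence $(v_n)_{n\ge1}\subset L^2(0,1)$ with $\|v_n\|\to\infty$ while $\mathcal{J}_k(v_n)$ stays bounded. That contradicts $\mathcal{J}_k(v)\to+\infty$ as $\|v\|\to\infty$, which is the definition of coercivity. The natural candidates are the eigenfunctions from Lemma \ref{lmsp1}, rescaled. Since $A$ is self-adjoint and $e^{sA}\phi_n=e^{-\lambda_n s}\phi_n$, everything becomes explicit.

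\textbf{Choice of sequence.} Set $v_n:=e^{\lambda_n(t_{k+1}-\tau_k)}\phi_n$. Then $\|v_n\|=e^{\lambda_n(t_{k+1}-\tau_k)}\to\infty$, because $\lambda_n\to\infty$ by Lemma \ref{lmsp1} and $t_{k+1}-\tau_k>0$. In particular $\|v_n\|\ge e^{\lambda_1(t_{k+1}-\tau_k)}$ is large.

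\textbf{Quadratic term.} We have $e^{(t_{k+1}-\tau_k)A}v_n=\phi_n$. Hence
\[
\tfrac12\|\mathds{1}_\omega^*e^{(t_{k+1}-\tau_k)A}v_n\|_\omega^2=\tfrac12\|\phi_n\|_{L^2(\omega)}^2\le\tfrac12 .
\]

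\textbf{Linear term.} We compute $e^{(t_{k+1}-t_k)A}v_n=e^{-\lambda_n(\tau_k-t_k)}\phi_n$. By Cauchy--Schwarz,
\[
|\langle y_0,e^{(t_{k+1}-t_k)A}v_n\rangle|\le e^{-\lambda_n(\tau_k-t_k)}\|y_0\|\le \|y_0\| .
\]
This uses $\tau_k>t_k$ and $\lambda_n>0$. Therefore $\mathcal{J}_k(v_n)\le \tfrac12+\|y_0\|$ for all $n$, while $\|v_n\|\to\infty$, so $\mathcal{J}_k$ is not coercive.

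\textbf{Main obstacle.} This argument only needs the exponential smoothing of the semigroup along high modes. The one point to check is that the inner product is taken against the propagated $v$, with the semigroup applied to $v$ as in the intended definition. If instead one wants non-coercivity in the stronger sense $\liminf \mathcal{J}_k(v)/\|v\|\le 0$, the same sequence gives it directly, since $\mathcal{J}_k(v_n)/\|v_n\|\to0$.
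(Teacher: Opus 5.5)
Your proof is correct, but it takes a more direct route than the paper.

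\textbf{The paper's route.} The paper argues by contradiction through duality. It invokes the compact resolvent to assert that the system is not exactly controllable, and picks a non-reachable target $y_1$. It then shows that coercivity of $\mathcal{J}_k$ would force the exact observability inequality $\|\mathds{1}_\omega^* e^{(t_{k+1}-\tau_k)A}v\|_\omega^2\ge C\|v\|^2$; it does this with a rescaling $w_n=\pm\sqrt{n}\,v_n$ much like yours, choosing the sign so that the linear term is nonpositive. From this it deduces coercivity of the shifted functional $\mathcal{J}_k-\langle y_1,\cdot\rangle$. Finally, the minimizer and the Euler--Lagrange equation make $y_1$ reachable, which is the contradiction.

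\textbf{Comparison.} Both arguments hinge on the failure of that observability inequality. The paper gets this failure indirectly, from an abstract non-exact-controllability result. You exhibit it explicitly with the high modes: $\|\mathds{1}_\omega^* e^{sA}\phi_n\|_\omega\le e^{-\lambda_n s}\to 0$ while $\|\phi_n\|=1$. You then rescale so the quadratic term stays bounded, and control the linear term by the extra smoothing $e^{-\lambda_n(\tau_k-t_k)}$ instead of a sign choice.

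Your version buys the following:
\begin{itemize}
\item It is self-contained, using only Lemma \ref{lmsp1} and $\lambda_n\to\infty$.
\item It avoids the external controllability proposition and the reachability/Euler--Lagrange step.
\item It is quantitative: $\mathcal{J}_k$ stays bounded along an unbounded sequence, and even $\mathcal{J}_k(v_n)/\|v_n\|\to 0$. That is exactly the defect the penalization $\varepsilon\|y_0\|\|v\|$ in Lemma \ref{coerv2} is designed to cure.
\end{itemize}
The paper's route, in exchange, ties the lack of coercivity to its structural cause, the absence of exact controllability, without reference to eigenfunctions.

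\textbf{Small remarks.} Your bound $e^{-\lambda_n(\tau_k-t_k)}\le 1$ only needs $\lambda_n>0$ for large $n$, so it survives even if finitely many eigenvalues of $-A$ were nonpositive. This matters because the paper itself only guarantees that $A-\delta I$ is negative. The aside that $e^{\lambda_1(t_{k+1}-\tau_k)}$ is ``large'' plays no role and can be dropped.
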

\begin{proof}
    Given the compactness of the injection \(D(A) \hookrightarrow L^2(0,1)\), it results from [Proposition 4.25] that the operator $A$ has a compact resolvent. Consequently, according to [Proposition 8.11] we infer that the system \eqref{ACP1} is not exactly controllable. Furthermore, let $y_1$ be a non-reachable point in $L^2(0,1)$. In particular, one has \[y(T,y_0,0) \neq y_1.\]
    Next, we introduce the following functional 
    \[\mathcal{J}_{1,k}(v)= \dfrac{1}{2}\|\mathds{1}_{\omega}^*e^{(t_{k+1}-\tau_k)A}v\|_{\omega}^2+\langle y_0,e^{(t_{k+1}-t_k)A}v\rangle - \left\langle y_1 , v \right\rangle.\]
    The idea is to assume by contradiction that $\mathcal{J}_k$ is coercive, which in turn implies that $\mathcal{J}_{1,k}$ is coercive leading to the reachability of $y_1$.
    
    Firstly, assume that the functional $\mathcal{J}_k$ is coercive. Our main is to prove that there exists a positive constant $C>0$ such that
    \begin{equation}\label{ineq1}
        \|\mathds{1}_{\omega}^{*} e^{(t_{k+1}-\tau_k) A} v \|_{\omega}^2 \geq C \|v\|^2, \qquad \forall v \in L^2(0,1).
    \end{equation}
    By contradiction, we assume that\eqref{ineq1} is not true, i.e. there exists a sequence $(v_n)_{n \geq 0}$ such that $\|v_n\|=1$ and 
    \[\|\mathds{1}_{\omega}^{*} e^{(t_{k+1}-\tau_k) A} v_n \|_{\omega}^2 \leq \dfrac{1}{n^2}.\]
    Indeed, if we have that \(\langle y_0,e^{(t_{k+1}-t_k)}v_n\rangle \geq 0\) in the definition of the functional \(\mathcal{J}_k\) then we can choose \(w_n := -\sqrt{n} v_n\) to achieve
    \[\mathcal{J}_k(w_n) \leq \dfrac{n}{2}\|\mathds{1}_{\omega}^*e^{(t_{k+1}-\tau_k)A}v_n\|_{\omega}^2 \leq \dfrac{1}{2n}.\]
    Otherwise, if \(\langle y_0,e^{(t_{k+1}-t_k)}v_n\rangle \leq 0\) then taking $w_n:= \sqrt{n} v_n$ leads to
    \[\mathcal{J}_k(w_n) \leq \dfrac{n}{2}\|\mathds{1}_{\omega}^*e^{(t_{k+1}-\tau_k)A}v_n\|_{\omega}^2 \leq \dfrac{1}{2n}.\]
    Choose $\delta_n :=\epsilon_n w_n$ with $\epsilon_n := \pm 1$. This leads us to the conclusion that
    \[\mathcal{J}_k(\delta_n) \rightarrow 0 \qquad \text{and}\;\; \|\delta_n\| \rightarrow \infty \;\;\text{as}\;\; n \to \infty,\]
    which allows us to infer that \(\mathcal{J}_k\) is not coercive.
    
    Secondly, let us prove the coercivity of the functional \(\mathcal{J}_{1,k}\). Given that \(\mathcal{J}_k\) is coercive, there exists a positive constant $\mathcal{M}_1$ such that
    \[\mathcal{J}_k(v) \geq \mathcal{M}_1 \|v\|^2, \qquad \text{for all} \;\; v\in L^2(0,1).\]
    Hence, by using Young's inequality one can write that
    \begin{align*}
        \mathcal{J}_{1,k}(v) 
        = &\mathcal{J}_k(v) - \left\langle y_1 , v \right\rangle\\
        \geq &\mathcal{M}_1 \|v\|^2 -\left\langle y_1 , v \right\rangle\\
        \geq &\dfrac{\mathcal{M}_1}{2} \|v\|^2 - \dfrac{1}{2 \mathcal{M}_1} \|y_1\|^2.
    \end{align*}
    This implies that \(\mathcal{J}_{1,k}\) is coercive. Moreover, since the functional \(\mathcal{J}_{1,k}\) is also continuous and strictly convex, then it has a unique minimum denoted by \(\tilde{v}_k \in L^2(0,1,)\). Therefore, applying the Euler-Lagrange equation related to $\tilde{v}_k$ yields
    \[\mathds{1}_{\omega}^{*} e^{2(t_{k+1}-\tau_k)A} \tilde{v}_k + e^{(t_{k+1}-t_k)A} y_0 - y_1=0.\]
    Since \(\mathds{1}_{\omega} \mathds{1}^{*}_{\omega} = \chi_{\omega}\), one has
    \[y_1= e^{(t_{k+1}-t_k)A} y_0 + e^{(t_{k+1}-\tau_k)A}\mathds{1}_{\omega} \left(\mathds{1}^{*}_{\omega} e^{(t_{k+1}-\tau_k)A} \tilde{v}_k\right).\]
    By taking \(v_k := \mathds{1}^{*}_{\omega} e^{(t_{k+1}-\tau_k)A} \tilde{v}_k\), one gets that 
    \begin{equation}\label{ineq9}
        y_1= y(t_{k+1}, y_0, v_k) \qquad \text{for each} \;\; k \geq 0,
    \end{equation}
    where $y$ is the solution of \eqref{ACP1} associated to $y_0$ and the impulse control $v_k$ on the interval $(t_{k+1}, t_k)$. Consequently, \eqref{ineq9} is a contradiction with the fact that $y_1$ is defined as a non-reachable point. This enables us to conclude that the functional \(\mathcal{J}_k\) is not coercive in \(L^2(0,1)\).
    \end{proof}
 As shown in the above Lemma, the functional $\mathcal{J}_k$ is not coercive due to the lack of exact controllability, which may lead to unbounded minimizing sequences and absence of a minimizer in $L^2(0,1)$. To address this, we introduce a penalization term, ensuring coercivity and existence of a unique minimizer while approximating the original problem. We thus propose the following function
\[\mathcal{J}_{\varepsilon,k}(v) := \dfrac{1}{2}\|\mathds{1}_{\omega}^*e^{(t_{k+1}-\tau_k)A}v\|_{\omega}^2+\langle y_0,e^{(t_{k+1}-t_k)A}v\rangle + \varepsilon \|y_0\|\|v\|,\]
where $\varepsilon$ satisfies 
\begin{equation}\label{choice}
    \|y(t_{k+1},y_0,0)\| > \varepsilon\|y_0\| \qquad \text{for each}\;\; k \geq 0.
\end{equation} 
\begin{lemma}\label{coerv2}
    The functional \(\mathcal{J}_{\varepsilon,k}\) satisfies the following properties
    \begin{itemize}
        \item[(i)] \(\displaystyle{\lim_{q \to \infty}} \displaystyle{\inf_{\|v\|=q}} \dfrac{\mathcal{J}_{\varepsilon,k}(v)}{\|v\|} \geq \varepsilon\|y_0\|\).\\
        \item[(ii)] \(\mathcal{J}_{\varepsilon,k}\) has a unique nonzero minimum in $L^2(0,1)$, denoted as \(v_{\epsilon,k}\).
    \end{itemize}
\end{lemma}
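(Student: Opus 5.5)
The plan follows the classical Fabre–Puel–Zuazua argument (unique continuation plus compactness), adapted to the functional $\mathcal{J}_{\varepsilon,k}$. Write $S_k:=e^{(t_{k+1}-\tau_k)A}$ and $R_k:=e^{(t_{k+1}-t_k)A}$, and read the middle term as $\langle y_0,R_kv\rangle=\langle R_ky_0,v\rangle$, using self-adjointness of the semigroup. Note that $R_ky_0=y(t_{k+1},y_0,0)$.

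\textbf{Property (i).} Argue by contradiction. Take $v_n$ with $\|v_n\|\to\infty$ and
\[
\liminf_n \frac{\mathcal{J}_{\varepsilon,k}(v_n)}{\|v_n\|}<\varepsilon\|y_0\|,
\]
and normalize $w_n:=v_n/\|v_n\|$. Then
\[
\frac{\mathcal{J}_{\varepsilon,k}(v_n)}{\|v_n\|}=\frac{\|v_n\|}{2}\,\|\mathds{1}_{\omega}^*S_kw_n\|_{\omega}^2+\langle R_ky_0,w_n\rangle+\varepsilon\|y_0\|.
\]
The middle term is bounded. Hence, along a subsequence, $\|\mathds{1}_{\omega}^*S_kw_n\|_{\omega}\to0$. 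Extract $w_n\rightharpoonup w$ weakly in $L^2(0,1)$. Since $A$ has compact resolvent and the semigroup is analytic, $S_k$ is compact, so $S_kw_n\to S_kw$ strongly. It follows that $\mathds{1}_{\omega}^*S_kw=0$, i.e.\ $S_kw=0$ on $\omega$.

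Next invoke unique continuation: the observability estimate of Theorem~\ref{theo1.1} on the interval $(0,t_{k+1}-\tau_k)$ gives $\|S_kw\|\le (C\|S_kw\|_{L^2(\omega)})^{\rho}\|w\|^{1-\rho}=0$. Then backward uniqueness, which follows from the spectral decomposition of Lemma~\ref{lmsp1} (all coefficients $e^{-\lambda_j s}\langle w,\phi_j\rangle$ vanish), yields $w=0$. Thus $w_n\rightharpoonup0$, so $\langle R_ky_0,w_n\rangle\to0$. Dropping the nonnegative first term, the liminf is then at least $\varepsilon\|y_0\|$, which is the contradiction. This gives (i), and in particular coercivity of $\mathcal{J}_{\varepsilon,k}$.

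\textbf{Property (ii).} $\mathcal{J}_{\varepsilon,k}$ is continuous and convex on $L^2(0,1)$, and coercive by (i). Hence a minimizer exists by the direct method, using weak lower semicontinuity.

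To see that the minimizer is nonzero, use \eqref{choice}. Take $v=-sR_ky_0$ with $s>0$ small:
\[
\mathcal{J}_{\varepsilon,k}(v)=\frac{s^2}{2}\|\cdot\|^2-s\|R_ky_0\|^2+s\,\varepsilon\|y_0\|\|R_ky_0\|=s\|R_ky_0\|\bigl(\varepsilon\|y_0\|-\|R_ky_0\|\bigr)+O(s^2)<0=\mathcal{J}_{\varepsilon,k}(0).
\]
So $0$ is not a minimizer.

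\textbf{Uniqueness.} This is the step I expect to be the main obstacle, since the penalty $\varepsilon\|y_0\|\|v\|$ is not strictly convex along rays. Suppose $v_1\ne v_2$ are both minimizers. By convexity, every point of the segment between them is a minimizer. Each summand is convex, so each must be affine on the segment:
\begin{itemize}
\item the quadratic term being affine forces $\mathds{1}_{\omega}^*S_k(v_1-v_2)=0$, hence $v_1=v_2$ by the same unique continuation argument as in (i);
\end{itemize}
(If instead one only has collinearity from the norm, $v_2=\lambda v_1$, the quadratic term again forces $(1-\lambda)\mathds{1}_{\omega}^*S_kv_1=0$. This gives $v_1=0$ or $\lambda=1$, and $v_1=0$ is excluded because the minimizer is nonzero.) This yields uniqueness of the nonzero minimizer $v_{\varepsilon,k}$.
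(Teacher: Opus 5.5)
Your proof is correct and follows the paper's route. For (i) you use the same contradiction argument with $w_n=v_n/\|v_n\|$, compactness of the semigroup and unique continuation. For (ii) you use the direct method, \eqref{choice} to exclude the zero minimizer, and injectivity of $v\mapsto \mathds{1}_{\omega}^{*}e^{(t_{k+1}-\tau_k)A}v$ for uniqueness.

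The differences are minor and, if anything, cleaner:
\begin{itemize}
\item You test the explicit direction $-s\,e^{(t_{k+1}-t_k)A}y_0$ rather than the paper's one-sided derivative bound over all $v$.
\item You replace the paper's three-case strict-convexity check, whose ``linearly dependent/independent'' labels are swapped, with the observation that the quadratic term must be affine on a segment of minimizers.
\item You justify unique continuation via Theorem~\ref{theo1.1} plus backward uniqueness from the spectral decomposition, where the paper only invokes it.
\end{itemize}
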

\begin{proof}
    To establish the result in $(i)$, we assume by contradiction the existence of \(0<\sigma <\epsilon\) together with a sequence \((v_n)_{n \geq 0}\) in $L^2(0,1)$ such that \(\displaystyle{\lim_{n \to \infty}} \| v_n\| = \infty\) and for any $n \in \mathbb{N}$
    \begin{equation}\label{ineq2}
        \dfrac{\mathcal{J}_{\varepsilon,k}(v_n)}{\|v_n\|} \leq (\varepsilon-\sigma)\|y_0\|.
    \end{equation}
    As $v_n \neq 0$ for all $n \in \mathbb{N}$, we can choose $w_n := \dfrac{v_n}{\|v_n\|}$. Therefore, from \eqref{ineq2} and Cauchy-Schwartz's inequality we arrive at
    \begin{align}\label{ineq3}
        &\overline{\lim} \,\dfrac{1}{2} \|\mathds{1}_{\omega}^{*} e^{(t_{k+1}-\tau_k)A} w_n \|^2_{\omega}\nonumber\\
        = &\overline{\lim} \,\dfrac{1}{2\|v_n\|^2} \|\mathds{1}_{\omega}^{*} e^{(t_{k+1}-\tau_k)A} v_n \|^2_{\omega}\nonumber\\
        = &\overline{\lim} \,\dfrac{1}{\|v_n\|} \left( \dfrac{\mathcal{J}_{\epsilon,k}(v_n)}{\|v_n\|} - \dfrac{1}{\|v_n\|} \left\langle y_0, e^{(t_{k+1}-\tau_k)A} v_n \right\rangle - \varepsilon\|y_0\| \right)\nonumber\\
        \leq &\overline{\lim} \,\dfrac{-\sigma}{\|v_n\|} -\dfrac{1}{\|v_n\|} \left\langle y_0, e^{(t_{k+1}-\tau_k)A} w_n \right\rangle\nonumber\\
        \leq &\overline{\lim} \,\dfrac{-\sigma}{\|v_n\|} - \dfrac{1}{\|v_n\|} \|y_0\| \|e^{(t_{k+1}-\tau_k)A} w_n\|.
    \end{align}
    From the definition of $w_n$, one can observe that \((e^{(t_{k+1}-t_k)A} w_n)_{n \geq 0}\) is bounded in $L^2(0,1)$ for each $k \geq 0$. This allows us to obtain that
    \begin{equation}\label{conveerg1}
        \overline{\lim} \,\dfrac{1}{2} \|\mathds{1}_{\omega}^{*} e^{(t_{k+1}-\tau_k)A} w_n \|^2_{\omega} = 0.
    \end{equation}
    On the other hand, there exists a subsequence of \((w_n)_{n \geq 0}\) denoted by itself such that
    \[w_n \to w \;\;\text{weakly in}\;\; L^2(0,1), \;\;\text{for some}\;\; w \in L^2(0,1).\]
    Considering the compactness of the semigroup \((e^{tA})_{t \geq 0}\), the aforementioned convergence implies
    \begin{equation}\label{converg2}
        e^{(t_{k+1}-\tau_k)A} w_n \to e^{(t_{k+1}-\tau_k)A} w \qquad \text{as}\;\; n \to \infty,
    \end{equation}
    and
    \begin{equation}\label{converg3}
        \mathds{1}_{\omega}^{*} e^{(t_{k+1}-\tau_k)A} w_n \to \mathds{1}_{\omega}^{*} e^{(t_{k+1}-\tau_k)A} w \qquad \text{as}\;\; n \to \infty.
    \end{equation}
    From the three obtained limits \eqref{conveerg1}, \eqref{converg2} and \eqref{converg3}, one can deduce that
    \[\| \mathds{1}_{\omega}^{*} e^{(t_{k+1}-\tau_k)A} w \|_{\omega}=0,\]
    and the unique continuation property allows us to get that $w=0$. Now, by turning back to the definition of \(\mathcal{J}_{\varepsilon,k}\) we find that
    \begin{align}\label{ineq5}
        \underline{\lim} \,\dfrac{\mathcal{J}_{\varepsilon,k}(v_n)}{\|v_n\|}
        = &\underline{\lim} \,\dfrac{1}{2\|v_n\|} \| \mathds{1}_{\omega}^{*} e^{(t_{k+1}-\tau_k)A} v_n \|_{\omega}^2 + \dfrac{1}{\|v_n\|} \left\langle y_0, e^{(t_{k+1}-\tau_k)A} v_n \right\rangle + \varepsilon\|y_0\|\nonumber\\
        \geq &\underline{\lim} \left\langle y_0, e^{(t_{k+1}-\tau_k)A} w_n \right\rangle + \varepsilon\|y_0\| = \varepsilon\|y_0\|,
    \end{align}
    which contradicts the assumption \eqref{ineq2}. This completes the proof.
    
    We are now prepared to prove the result in $(ii)$. Specifically, from $(i)$ it follows that the functional \(\mathcal{J}_{\varepsilon,k}\) is coercive. Moreover, it is continuous and convex which allows us to conclude that \(\mathcal{J}_{\varepsilon,k}\) admits a minimum, represented as $v_{\varepsilon,k}$, in $L^2(0,1)$, i.e.
    \begin{equation*}
        \mathcal{J}_{\varepsilon,k}(v_{\varepsilon,k}) = \min_{v \in L^2(0,1)} \mathcal{J}_{\varepsilon,k}(v).
    \end{equation*}
    
    To ensure the uniqueness of this minimum, we need to prove that \(\mathcal{J}_{\varepsilon,k}\) is strictly convex in $L^2(0,1)$. For this, we fix \(v_1,v_2 \in L^2(0,1)\) with \(v_1 \neq v_2\) and we consider $\lambda \in (0,1)$. In fact, one has
    \begin{align}\label{ineq6}
        &\mathcal{J}_{\epsilon,k}(\lambda v_1 + (1-\lambda) v_2)\nonumber\\
        = &\dfrac{1}{2} \| \mathds{1}_{\omega}^{*} e^{(t_{k+1}-\tau_k)A} (\lambda v_1 + (1-\lambda) v_2 \|_{\omega}^2 + \lambda \left\langle y_0, e^{(t_{k+1}-t_k)A} v_1\right\rangle\nonumber\\
        &+ (1-\lambda) \left\langle y_0, e^{(t_{k+1}-t_k)A} v_1\right\rangle + \varepsilon \|y_0\|\| \lambda v_1 + (1-\lambda) v_2 \|.
    \end{align}
    
    Firstly, if $v_1$ and $v_2$ are linearly dependent, the Cauchy-Schwartz inequality becomes strict and then it follows that
    \begin{equation}\label{ineq7}
        \| \lambda v_1 + (1-\lambda) v_2 \| \leq \lambda \|v_1\| + (1-\lambda) \|v_2\|.
    \end{equation}
    Combining \eqref{ineq6} and \eqref{ineq7}, one has
    \begin{equation*}
        \mathcal{J}_{\epsilon,k}(\lambda v_1 + (1-\lambda) v_2) \leq \lambda \mathcal{J}_{\epsilon,k}(v_1)+(1-\lambda) \mathcal{J}_{\epsilon,k}(v_2).
    \end{equation*}
    
    Secondly, if $v_1$ and $v_2$ are not linearly dependent then there exits $0<d \neq 1$ such that $v_1 = d v_2$. Considering
    \[H(\lambda)= \mathcal{J}_{\varepsilon,k}(\lambda v_2).\]
    Since \(v_2 \neq 0\), the unique continuation property guarantees that \(\| \mathds{1}_{\omega}^{*} e^{(t_{k+1}-\tau_k)A} v_2 \|_{\omega}^2 >0\) which affirms that $H$ is a quadratic function with a non-negative leading coefficient, implying that it is strictly convex. Coupled with \eqref{ineq7}, this establishes the strict convexity of \(\mathcal{J}_{\varepsilon,k}\).

    Thirdly, let us consider the case where either \(v_1=0\) or \(v_2=0\). Assume for instance that \(v_1=0\), we find that 
    \[\|\mathds{1}_{\omega}^{*} e^{(t_{k+1}-\tau_k)A} v_1 \|_{\omega}^2 =0 \;\;\text{and}\;\; \|\mathds{1}_{\omega}^{*} e^{(t_{k+1}-\tau_k)A} v_2 \|_{\omega}^2 >0.\]
    This implies that
    \begin{align}\label{ineq8}
        &\mathcal{J}_{\varepsilon,k}(\lambda v_1 + (1-\lambda) v_2)\nonumber\\
        = &\dfrac{(1-\lambda)^2}{2} \|\mathds{1}_{\omega}^{*} e^{(t_{k+1}-\tau_k)A} v_2 \|_{\omega}^2 + (1-\lambda) \left\langle y_0, e^{(t_{k+1}-t_k)A} v_2 \right\rangle + \varepsilon (1-\lambda) \|y_0\|\|v_2\|\nonumber\\
        < &(1-\lambda) \mathcal{J}_{\varepsilon,k}(v_2) + \lambda \mathcal{J}_{\varepsilon,k}(v_1).
    \end{align}
    As a result, \(\mathcal{J}_{\varepsilon,k}\) is shown to be strictly convex in $L^2(0,1)$. Now, let us proceed to establish that the minimum $v_{\varepsilon,k} \neq 0$. To do so, we assume by contradiction that $v_{\varepsilon,k}=0$, i.e.
    \[\mathcal{J}_{\varepsilon,k}(0)= \min_{v \in L^2(0,1)} \mathcal{J}_{\varepsilon,k}(v),\]
    which gives that
    \[\mathcal{J}_{\varepsilon,k}(0) \leq \mathcal{J}_{\varepsilon,k}( \lambda v), \;\;\; \lambda \in \mathbb{R}\;\; \text{and}\;\; v \in L^2(0,1).\]
    Thus
    \[\dfrac{\lambda^2}{2} \|\mathds{1}_{\omega}^{*} e^{(t_{k+1}-\tau_k)A} v\|^2_{\omega} + \lambda \left\langle y_0, e^{(t_{k+1}-t_k)A} v \right\rangle + \varepsilon \left\lvert \lambda \right\rvert \|y_0\|\|v\| \geq 0.\]
    Consequently, by passing to the limit as $\lambda \to 0^{+}$ and $\lambda \to 0^{-}$, we infer that for any $v \in L^2(0,1)$
    \[\left\lvert \left\langle y_0, e^{(t_{k+1}-t_k)A} v \right\rangle \right\rvert \leq \varepsilon \|y_0\|\|v\|,\]
    which implies that
    \begin{equation}
        \|y(t_{k+1},y_0,0)\| \leq \varepsilon \|y_0\|.
    \end{equation}
    This contradicts the choice of $\epsilon$ given in \eqref{choice}.
\end{proof}
Now, we are ready to state the principal result concerning the norm optimal null control by means of the minimum of \(\mathcal{J}_{\varepsilon,k}\).
\begin{theorem}
The following properties are true:
    \begin{itemize}
        \item [i)] The problem $(\mathcal{P})$ has a unique minimal norm control.
        \item [ii)] The minimal norm control $(\mathcal{L}_{k}^*)_{k\geq 0}$ satisfies that $\mathcal{L}_{k}^*=0$, $k\geq 0$ if and only if the solution of \eqref{ACP1} with $\mathcal{L}_{k}(y(t_k))=0$, $k\geq 0$ satisfies \[y(T)=0.\]
        \item [iii)] Let $v_{\varepsilon,k}$ be the unique minimum of \(\mathcal{J}_{\varepsilon,k}\), then the control given by
        \begin{equation}\label{normop}
            (u_{\varepsilon,k})_{k \geq 0}= \left( \mathds{1}_{\omega}^{*} e^{(t_{k+1}-\tau_k)A} v_{\varepsilon,k}\right)_{k \geq 0},
        \end{equation}
        is a null approximate control of \eqref{1.1}, i.e
        \[\|y(T,y_0,(u_{\varepsilon,k})_{k \geq 0}) \| \leq \varepsilon\|y_0\|.\]
        Moreover, there is $(u_k)_{k \geq 0} \in \ell^2(L^2(\omega))$ such that 
        \[ u_{\epsilon,k} \to u_k \qquad \text{weakly in} \;\; \ell^2(L^2(\omega)),\]
        and $u_k$ is a norm optimal null control. That is, $u_k$ is a solution of $(\mathcal{P})$.
    \end{itemize}
\end{theorem}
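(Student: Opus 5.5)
The plan is to treat $(\mathcal{P})$ as minimizing a norm over a closed affine subset of the Hilbert space $\ell^2(L^2(\omega))$, and to get the dual characterization by passing to the limit in the penalized functionals $\mathcal{J}_{\varepsilon,k}$.

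\emph{Existence and uniqueness (part i).} By \eqref{sol1}, the final state depends affinely and continuously on the impulse sequence $(u_k)_{k\ge0}=(\mathcal{L}_k(y(t_k)))_{k\ge0}\in\ell^2(L^2(\omega))$:
\[y(T)=\lim_{t\to T^-}\Big(\mathrm{e}^{tA}y_0+\sum_{k}\mathds{1}_{\{t\ge\tau_k\}}\mathrm{e}^{(t-\tau_k)A}\mathds{1}_\omega u_k\Big).\]
Hence the admissible set $\mathcal{U}_{ad}=\{(u_k)\in\ell^2(L^2(\omega)):y(T)=0\}$ is convex and closed. It is nonempty by Theorem \ref{thm1.4}: the feedback $\mathcal{L}_k$ gives $y(T)=0$, and \eqref{ImpCont} shows that $\sum_k\|\mathcal{L}_k(y(t_k))\|^2_\omega\le C_2\sum_k\mathrm{e}^{2k-\eta b^k/8}\|y_0\|^2<\infty$. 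By the projection theorem in the Hilbert space $\ell^2(L^2(\omega))$, $\mathcal{U}_{ad}$ then has a unique element of minimal norm, which gives i).

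\emph{Characterization of the zero control (part ii).} If $\mathcal{L}_k^*=0$ for all $k$, then $0\in\mathcal{U}_{ad}$, so the free solution satisfies $y(T)=0$. Conversely, if the free solution vanishes at $T$, then $0$ is admissible, and since it has norm $0$ it is the unique minimizer by i).

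\emph{Approximate control and passage to the limit (part iii).} Write the Euler--Lagrange inclusion for the minimizer $v_{\varepsilon,k}\neq0$ from Lemma \ref{coerv2}. Since the map $v\mapsto\|v\|$ is differentiable away from $0$, this gives
\[\mathds{1}_\omega^*\text{-term}:\quad \mathrm{e}^{(t_{k+1}-\tau_k)A}\mathds{1}_\omega u_{\varepsilon,k}+\mathrm{e}^{(t_{k+1}-t_k)A}y_0=-\varepsilon\|y_0\|\frac{v_{\varepsilon,k}}{\|v_{\varepsilon,k}\|},\]
with $u_{\varepsilon,k}$ as in \eqref{normop}. The left-hand side is the controlled state on the $k$-th interval, so its norm equals $\varepsilon\|y_0\|$. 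Propagating this to $T$ with Lemma \ref{lem} (the factor $\mathrm{e}^{\delta(T-t_{k+1})}$ is absorbed after rescaling $\varepsilon$) yields the approximate null controllability estimate.

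Next I need a bound on $(u_{\varepsilon,k})$ in $\ell^2$ that is uniform in $\varepsilon$. I would get it by comparing $\mathcal{J}_{\varepsilon,k}(v_{\varepsilon,k})\le0$ with the duality pairing against the admissible control from Theorem \ref{thm1.4}. Testing the optimality condition against $v_{\varepsilon,k}$ gives
\[\|u_{\varepsilon,k}\|^2_\omega=-\langle y_0,\mathrm{e}^{(t_{k+1}-t_k)A}v_{\varepsilon,k}\rangle-\varepsilon\|y_0\|\|v_{\varepsilon,k}\|.\]
Rewriting $y_0$ through an exact null control $(\bar u_k)$ then bounds the right-hand side by $\|\bar u_k\|_\omega\|u_{\varepsilon,k}\|_\omega$, so $\|u_{\varepsilon,k}\|_\omega\le\|\bar u_k\|_\omega$. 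Summing over $k$ gives $\|(u_{\varepsilon,k})\|_{\ell^2}\le N$.

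From this bound I extract a weakly convergent subsequence $u_{\varepsilon}\rightharpoonup u$ in $\ell^2(L^2(\omega))$. The control-to-state map is weakly continuous, since it is affine and continuous, so $y(T,y_0,u)=0$. Weak lower semicontinuity of the norm gives $\|u\|\le N$, so $u$ is the norm-optimal control, and by uniqueness the whole family converges.

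\textbf{Main obstacle.} The hardest step is the uniform bound $\|u_{\varepsilon}\|_{\ell^2}\le N$. The functionals $\mathcal{J}_{\varepsilon,k}$ are decoupled in $k$, while the null condition couples the intervals through the free evolution. The first thing I would try is to show that the duality identity holds on each interval $(t_k,t_{k+1})$ with $y(t_k)$ in place of $y_0$, and then sum the resulting inequalities over $k$.
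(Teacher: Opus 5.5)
\textbf{Main gap: the $\varepsilon$-uniform bound in iii).} The step that fails is the one you flagged, and for the decoupled functionals $\mathcal{J}_{\varepsilon,k}$ of the statement it cannot be repaired.

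Your comparison needs, for each fixed $k$, some $\bar u_k\in L^2(\omega)$ with $e^{(t_{k+1}-t_k)A}y_0+e^{(t_{k+1}-\tau_k)A}\mathds{1}_\omega\bar u_k=0$. That is a \emph{single} impulse steering $y_0$, placed at time $t_k$, exactly to $0$ at $t_{k+1}$. Theorem \ref{thm1.4} does not give this, because its trajectory $\bar y$ does not vanish at $t_{k+1}$. On $(t_k,t_{k+1})$ duality only gives
\[
\langle \bar y(t_k),e^{(t_{k+1}-t_k)A}v\rangle=\langle \bar y(t_{k+1}),v\rangle-\langle \bar u_k,\mathds{1}_\omega^*e^{(t_{k+1}-\tau_k)A}v\rangle_\omega .
\]
Even after replacing $y_0$ by $\bar y(t_k)$, as you suggest, the term $\langle\bar y(t_{k+1}),v_{\varepsilon,k}\rangle$ is absorbed by the penalization only while $\varepsilon\geq\|\bar y(t_{k+1})\|/\|\bar y(t_k)\|$, not as $\varepsilon\to0$.

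For these decoupled functionals no argument can work, because single-impulse null controllability fails. The relation $e^{(t_{k+1}-\tau_k)A}\bigl(e^{(\tau_k-t_k)A}y_0+\mathds{1}_\omega h\bigr)=0$ forces $e^{(\tau_k-t_k)A}y_0=0$ on $(a,1)$, by injectivity of the semigroup. Unique continuation then gives $y_0=0$, since the equation is nondegenerate with analytic coefficients on $(a,1)$. Now use your own (correct) identity $\|e^{(t_{k+1}-t_k)A}y_0+e^{(t_{k+1}-\tau_k)A}\mathds{1}_\omega u_{\varepsilon,k}\|=\varepsilon\|y_0\|$. A subsequence $\varepsilon_n\to0$ along which $\|u_{\varepsilon_n,k}\|_\omega$ stayed bounded would have a weak limit that is exactly such a null control. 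Hence for $y_0\neq0$ and every fixed $k$, $\|u_{\varepsilon,k}\|_\omega\to\infty$ as $\varepsilon\to0$. The family $(u_{\varepsilon,k})_{k\geq0}$ is therefore unbounded in $\ell^2(L^2(\omega))$ and has no weak limit, contrary to what iii) asserts.

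The paper tries instead to obtain the bound from \eqref{eqq13}, deduced from the stabilization estimate. But \eqref{eqq13} with $C$ independent of $\varepsilon$ would give, letting $\varepsilon\to0$, exact single-impulse observability on $(t_k,t_{k+1})$, which fails for the same reason. So that route does not establish the step either. Your comparison trick is the right HUM-type mechanism, but it needs one dual problem coupling all impulses through the final time, so that the multi-impulse null control of Theorem \ref{thm1.4} is an admissible competitor.

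\textbf{Secondary gaps (shared with the paper's proof).} First, \emph{propagating to $T$}. Your identity concerns the one-interval problem started from $y_0$ at time $t_k$. By contrast, $y(T,y_0,(u_{\varepsilon,k})_{k\geq0})$ starts from $y_0$ at time $0$ and receives every impulse, so for $k\geq1$ its state at $t_k$ is not $y_0$. Moreover, Lemma \ref{lem} only propagates across impulse-free intervals, not across the infinitely many impulses in $(t_{k+1},T)$. So this step does not give $\|y(T,y_0,(u_{\varepsilon,k})_{k\geq0})\|\leq\varepsilon\|y_0\|$.

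Second, in i) the control-to-state map is not continuous on $\ell^2(L^2(\omega))$; it is not even everywhere defined. Since $e^{(T-\tau_k)A}\to I$ strongly, the series in \eqref{sol1} diverges, for instance, for $u_k=\frac{1}{k+1}\mathds{1}_\omega^*\phi_1$: its $\phi_1$-component grows like a harmonic series. So closedness of $\mathcal{U}_{ad}$ needs a genuine argument; the paper merely asserts weak closedness. Granted closedness, your projection argument for i) and your argument for ii) agree with the paper's.
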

\begin{proof}
    \begin{enumerate}
    \item[i)] Since the system \eqref{1.1} is null impulse controllable and the control impulse satisfies \eqref{ImpCont} , the following set is non-empty:
        \begin{equation*}
            \mathcal{F}_{ad}:=\lbrace (\mathcal{L}_{k}(y(t_k))_{k\geq 0}\in l^2(L^2(\omega)): \;\;y(T)=0 \rbrace.
        \end{equation*}
That ensures  $\mathcal{F}_{ad} \neq \varnothing$.

    On the other hand, we have $\mathcal{F}_{ad}$ is weakly closed in $l^2(L^2(\omega))$,then the problem $(\mathcal{P})$ has a minimal norm control. Now, if we consider  $(\mathcal{R}_k)_{k\geq 0}$ and $(\mathcal{S}_k)_{k\geq 0}$ as two minimal norm controls to $(\mathcal{P})$. Then we can get
    \begin{equation}\label{ega1}
    0\leq \sum\limits_{k\geq 0}\| \mathcal{R}_{k} \|_{L^2(\omega)}^2=\sum\limits_{k\geq 0}\| \mathcal{S}_{k} \|_{L^2(\omega)}^2=N^2<\infty. 
    \end{equation} 
    Furthermore, it can be verified that $\left((\mathcal{R}_k+\mathcal{S}_k)/2\right)_{k\geq 0}$ also constitutes a minimal norm control for $(\mathcal{P})$. Hence, by applying the Parallelogram low one can derive
    \begin{align}\label{ega2}
    \begin{split}
    &\sum\limits_{k\geq 0}\| (\mathcal{R}_k-\mathcal{S}_k)/2 \|_{L^2(\omega)}^2\\
    = &\dfrac{1}{2}\left( \sum\limits_{k\geq 0}\| \mathcal{R}_{k} \|_{L^2(\omega)}^2+\sum\limits_{k\geq 0}\| \mathcal{S}_{k} \|_{L^2(\omega)}^2\right) - \sum\limits_{k\geq 0}\| (\mathcal{R}_k+\mathcal{S}_k)/2 \|_{L^2(\omega)}^2\\
    = &\dfrac{1}{2}\left( \sum\limits_{k\geq 0}\| \mathcal{R}_{k} \|_{L^2(\omega)}^2+\sum\limits_{k\geq 0}\| \mathcal{S}_{k} \|_{L^2(\omega)}^2\right) - N^2.
     \end{split}
     \end{align} 
    By \eqref{ega1} and \eqref{ega2}, we obtain 
    \[ \mathcal{R}_k=\mathcal{S}_k, \quad \forall k\in \mathbb{N}. \]
    \item[ii)] This follows directly from the definition of the problem \((\mathcal{P})\).
    \item[iii)] Let us fix $k\geq 0$ and consider $v_{\varepsilon,k}$ as the unique minimum of $\mathcal{J}_{\varepsilon,k}$, i.e. 
    \[\mathcal{J}_{\varepsilon,k}(v_{\varepsilon,k}) \leq \mathcal{J}_{\varepsilon,k}(v_{\varepsilon,k}+ \lambda v), \;\;\text{for all}\;\;\lambda\in \mathbb{R}\;\;\text{and}\;\;v\in L^2(0,1).\]
    This implies that
    \begin{align*}
        &\dfrac{1}{2} \| \mathds{1}_{\omega}^{*} e^{(t_{k+1}-\tau_k)A} v_{\varepsilon,k}\|^2_{\omega} + \left\langle y_0, e^{(t_{k+1}-t_k)A} v_{\varepsilon,k} \right\rangle + \varepsilon \|y_0\|\| v_{\varepsilon,k}\|\\
        \leq &\dfrac{1}{2} \| \mathds{1}_{\omega}^{*} e^{(t_{k+1}-\tau_k)A} (v_{\varepsilon,k} + \lambda v) \|^2_{\omega} + \left\langle y_0, e^{(t_{k+1}-t_k)A} v_{\varepsilon,k} \right\rangle \\
        &+ \lambda \left\langle y_0, e^{(t_{k+1}-t_k)A} v \right\rangle + \varepsilon \|y_0\|\| v_{\varepsilon,k} + \lambda v\|\\
        \leq &\dfrac{1}{2} \| \mathds{1}_{\omega}^{*} e^{(t_{k+1}-\tau_k)A} (v_{\varepsilon,k} + \lambda v) \|^2_{\omega} + \left\langle y_0, e^{(t_{k+1}-t_k)A} v_{\varepsilon,k} \right\rangle \\
        &+ \lambda \left\langle y_0, e^{(t_{k+1}-t_k)A} v \right\rangle + \varepsilon \|y_0\|\| v_{\varepsilon,k}\| + \varepsilon \left\lvert \lambda \right\rvert \|y_0\| \|v\|,
    \end{align*}
    which enables us to write
    \begin{equation}\label{eqq12}
        \begin{aligned}
        0\leq &\dfrac{1}{2} \left(\| \mathds{1}_{\omega}^{*} e^{(t_{k+1}-\tau_k)A} (v_{\varepsilon,k}+\lambda v)\|_{\omega} \mathds{1}_{\omega}^{*} e^{(t_{k+1}-\tau_k)A} v_{\varepsilon,k}\|_{\omega} \right)\nonumber\\
        &\times \left(\| \mathds{1}_{\omega}^{*} e^{(t_{k+1}-\tau_k)A} (v_{\varepsilon,k}+\lambda v)\|_{\omega} - \| \mathds{1}_{\omega}^{*} e^{(t_{k+1}-\tau_k)A} v_{\varepsilon,k}\|_{\omega}  \right)\nonumber\\
        &+ \lambda \left\langle y_0, e^{(t_{k+1}-t_k)A} v \right\rangle +  \varepsilon \left\lvert\lambda \right\rvert \|v\| \|y_0\|.
    \end{aligned}
    \end{equation}
    Then, for every $\lambda \neq 0$ the estimate \eqref{eqq12} leads us to
    \begin{align*}
        0\leq &\dfrac{1}{2} \left(\| \mathds{1}_{\omega}^{*} e^{(t_{k+1}-\tau_k)A} (v_{\varepsilon,k}+\lambda v)\|_{\omega} + \| \mathds{1}_{\omega}^{*} e^{(t_{k+1}-\tau_k)A} v_{\varepsilon,k}\|_{\omega} \right)\nonumber\\
        &\times \dfrac{1}{\left\lvert \lambda \right\rvert} \left(\| \mathds{1}_{\omega}^{*} e^{(t_{k+1}-\tau_k)A} (v_{\epsilon,k}+\lambda v)\|_{\omega} -\| \mathds{1}_{\omega}^{*} e^{(t_{k+1}-\tau_k)A} v_{\varepsilon,k}\|_{\omega} \right)\nonumber\\
        &+ \dfrac{\lambda}{\left\lvert\lambda \right\rvert} \left\langle y_0, e^{(t_{k+1}-t_k)A} v \right\rangle +  \varepsilon \|v\| \|y_0\|.
    \end{align*}
    Passing to the limit $\lambda \to 0^{+}$ and $\lambda \to 0^{-}$ in the above inequality, one can obtain 
    \begin{equation*}
        \left\lvert \left\langle \mathds{1}_{\omega}^{*} e^{(t_{k+1}-\tau_k)A} v_{\varepsilon,k} , \mathds{1}_{\omega}^{*} e^{(t_{k+1}-\tau_k)A} v\right\rangle+ \left\langle e^{(t_{k+1}-t_k)A} y_0,  v \right\rangle \right\rvert \leq \varepsilon \| v\| \|y_0\|.
    \end{equation*}
    Then from the above estimate and \eqref{normop}, it follows that
    \begin{equation}
        \left\lvert \left\langle e^{(t_{k+1}-t_k)A} y_0 + \mathds{1}_{\omega} e^{(t_{k+1}-\tau_k)A} u_{\varepsilon,k} , v \right\rangle \right\rvert \leq \varepsilon \| v \| \|y_0\|.
    \end{equation}
    On the other hand, the solution of the impulsive system \eqref{ACP1}corresponding to the initial data $y_0$ and the control function $u_{\epsilon,k}$ over the interval $(t_k, t_{k+1})$, can be expressed as follows
    \[y(t_k,t_{k+1},y_0,u_{\varepsilon,k})= e^{(t_{k+1}-t_k)A} y_0 + \mathds{1}_{\omega} e^{(t_{k+1}-\tau_k)A} u_{\varepsilon,k},\]
    this gives that
    \[\left\lvert \left\langle y(t_k,t_{k+1},y_0,u_{\varepsilon,k}) , v \right\rangle \right\rvert \leq \varepsilon \| v \| \|y_0\|, \;\;\forall v \in L^2(0,1),\]
    which implies 
    \begin{equation}\label{eqq8}
        \| y(t_k,t_{k+1}, y_0, u_{\varepsilon,k}) \| \leq \varepsilon \|y_0\|.
    \end{equation}
    This indicates that the impulsive system \eqref{ACP1} is null approximately controllable on the interval $(t_k, t_{k+1})$. Moreover, since the system \eqref{ACP1} is finite-time stabilizable, the control satisfies
    \[ \| \mathcal{L}_k(y(t_k)) \|^2_{\omega} \leq C_2 e^{-2k} \|y_0\|^2.\]
    Now, let us prove that for any $v \in L^2(0,1)$, one has
    \begin{equation}\label{eqq13}
        \| e^{(t_{k+1}-t_k)A} v \| \leq C e^{-k} \|y_0\|\| \mathds{1}_{\omega}^{*} e^{(t_{k+1}-\tau_k)A} v \|_{\omega} + \varepsilon\|y_0\| \| v\|.
    \end{equation}
    To this end, let us consider $v \in L^2(0,1)$ and multiply \(\eqref{ACP1}_{(1)}\) by $e^{(t_{k+1}-t)A} v$ and integrate over $(0,1)$ in order to derive
    \[\left\langle \partial_t y(t) , e^{(t_{k+1}-t)A} v \right\rangle - \left\langle A \,y(t) , e^{(t_{k+1}-t)A} v \right\rangle=0.\]
    The self-adjoint nature of the operator $A$ enables us to conclude that 
    \begin{equation}\label{eqq4}
        \partial_t \left( \left\langle y (t), e^{(t_{k+1}-t)A} v \right\rangle \right) =0.
    \end{equation}
    Integrating \eqref{eqq4} over the interval $\left[t_k , \tau_k\right)$ yields
    \begin{equation}\label{eqq5}
        \left\langle y(\tau_k^{-}) , e^{(t_{k+1}-\tau_k)A} v \right\rangle - \left\langle y(t_k) , e^{(t_{k+1}-t_k)A} v \right\rangle =0.
    \end{equation}
     Similarly, integrating over the interval  $\left(\tau_k, t_{k+1}\right)$ gives
    \begin{equation}\label{eqq6}
        \left\langle y(t_{k+1}) ,  v \right\rangle - \left\langle y(\tau_k) , e^{(t_{k+1}-\tau_k)A} v \right\rangle   =0.
    \end{equation}
    Combining the results from \eqref{eqq5} and \eqref{eqq6}, and using the fact that
    \[y(\tau_k) = y(\tau^{-}_{k}) + \mathds{1}_{\omega} \mathcal{L}_k (y(t_k)),\] 
    we can write
    \begin{equation}
        \left\langle y(t_{k+1}) , v \right\rangle = \left\langle y(t_k) , e^{(t_{k+1}-t_k)A} v \right\rangle + \left\langle \mathds{1}_{\omega} \mathcal{L}_k(y(t_k)) , e^{(t_{k+1}-\tau_k)A} v \right\rangle.
    \end{equation}
    Thus, for each $k\geq 0$,
    \begin{align*}
        \| e^{(t_{k+1}-t_k)A} v \|
        &= \sup_{\| y(t_k)\| \leq 1} \left\langle e^{(t_{k+1}-t_k)A} v , y(t_k) \right\rangle\nonumber\\
        &=\sup_{\| y(t_k)\| \leq 1} \left( \left\langle y(t_{k+1}) , v \right\rangle - \left\langle \mathcal{L}_k (y(t_k)) , e^{(t_{k+1}-\tau_k ) A} v \right\rangle_{\omega} \right)\nonumber\\
        &\leq \sup_{\| y(t_k)\| \leq 1} \left( \| y(t_{k+1})\| \| v\| + \| \mathcal{L}_k (y(t_k)) \|_{\omega} \| e^{(t_{k+1}-\tau_k)A} v\|_{\omega} \right).
    \end{align*}
    Given that $\| \mathcal{L}_k (y(t_k)) \|_{\omega} \leq C e^{-k} \| y_0\|$, and using this together with \eqref{eqq8}, we can obtain the desired estimate \eqref{eqq13}. Now, let us turn back to the fact that the functional $\mathcal{J}_{\varepsilon,k}$ admits a unique minimum $v_{\epsilon,k}$, this enables us to derive
    \[\mathcal{J}^{'}_{\varepsilon,k} (v_{\varepsilon,k}) \cdot w =0, \;\;\forall w \in L^2(0,1),\]
    which implies that
    \[\left\langle \mathds{1}^{*}_{\omega} e^{(t_{k+1}-\tau_k)A} v_{\varepsilon,k}, \mathds{1}^{*}_{\omega} e^{(t_{k+1}-\tau_k)A} w \right\rangle_{\omega} + \left\langle y_0 , e^{(t_{k+1}-t_k)A} w \right\rangle_{\omega} + \varepsilon \|y_0\|\dfrac{\left\langle v_{\varepsilon,k} , w \right\rangle}{\| v_{\varepsilon,k} \|} =0,\]
    take $w=v_{\varepsilon,k}$ in the above quantity and using the estimate \eqref{eqq13} to get
    \begin{align}\label{eqq9}
       & \| \mathds{1}^{*}_{\omega} e^{(t_{k+1}-\tau_k)A} v_{\varepsilon,k} \|_{\omega}^2\nonumber\\
        &= - \left\langle y_0 , e^{(t_{k+1}-t_k)A} v_{\varepsilon,k} \right\rangle_{\omega} - \varepsilon \|y_0\| \| v_{\varepsilon,k} \|\nonumber\\
        &\leq \| y_0 \| \| e^{(t_{k+1}-t_k)A} v_{\varepsilon,k} \| - \varepsilon \|y_0\| \| v_{\varepsilon,k} \|\nonumber\\
        &\leq \left( \varepsilon \|y_0\| \| v_{\varepsilon,k} \| + C e^{-k} \| \mathds{1}^{*}_{\omega} e^{(t_{k+1}-t_k)A} v_{\varepsilon,k} \|_{\omega} \| y_0 \| \right) \| y_0\| - \varepsilon \|y_0\|\| v_{\varepsilon,k} \|\nonumber\\
        &=C e^{-k} \| \mathds{1}^{*}_{\omega} e^{(t_{k+1}-t_k)A} v_{\varepsilon,k} \|_{\omega} \| y_0 \|^2.
    \end{align}
    That allows us to give
    \[\| \mathds{1}^{*}_{\omega} e^{(t_{k+1}-\tau_k)A} v_{\varepsilon,k} \|_{\omega} \leq C e^{-k} \| y_0 \|^2.\]
    By summing over $k \geq 0$, one has
    \begin{align*}
        \sum_{k \geq 0} \| \mathds{1}^{*}_{\omega} e^{(t_{k+1}-\tau_k)A} v_{\varepsilon,k} \|_{\omega} 
        \leq C \sum_{k \geq 0} e^{-k} \| y_0 \|^2
        =\dfrac{C}{1-e^{-1}} \| y_0\|^2.
    \end{align*}
   Then, we infer that there exists a positive constant $\mathcal{K}$ such that
   \begin{equation*}
       \| (\mathds{1}^{*}_{\omega} e^{(t_{k+1}-\tau_k)A} v_{\varepsilon,k})_{k \geq 0} \|_{\ell^2(L^2(\omega))} \leq \mathcal{K} \| y_0\|^2, 
   \end{equation*}
   from \eqref{normop}, the above inequality becomes
   \begin{equation}\label{eqq10}
       \| ( u_{\varepsilon,k})_{k \geq 0} \|_{\ell^2(L^2(\omega))} \leq \mathcal{K} \| y_0\|^2.
   \end{equation}
   This allows us to deduce that there is $(u_k)_{k \geq 0} \in \ell^2(L^2(\omega))$ such that 
   \[u_{\varepsilon,k} \to u_k \;\;\text{as}\;\varepsilon \to 0 \;\;\text{weakly in}\;\;\ell^2(L^2(\omega)).\]
   By the linearity of $(y_0,v) \mapsto y(T, y_0, v)$, one has
   \begin{equation*}
       y(T,y_0,(u_{\varepsilon,k})_{k \geq 0}) \to y(T,y_0,(u_k)_{k \geq 0}) \;\;\text{as}\;\;\varepsilon\to 0\;\;\text{weakly in}\;\;L^2(0,1).
   \end{equation*}
   Moreover, one has
   \begin{align*}
       \| y(T,y_0,(u_{\varepsilon,k})_{k \geq 0})\|
       &\leq e^{\frac{T}{b^{k+1}}\delta} \| y(t_{k+1},y_0,(u_{\varepsilon,k})_{k \geq 0})\| \\
       &\leq \varepsilon \, e^{\frac{T}{b^{k+1}}\delta}\|y_0\| \to 0 \qquad\text{as}\;\;\varepsilon\to 0.
   \end{align*}
   This implies that 
   \[y(T,y_0,(u_{\varepsilon,k})_{k \geq 0})=0.\]
    Hence, it follows that \( (u_{\varepsilon,k})_{k \geq 0} \) constitutes a solution to problem \( (\mathcal{P}) \). Now, we proceed to establish the uniqueness of the solution. To do so, we assume \( u_1 \) to be an arbitrary null control for \eqref{ACP1}. Consequently,
    \begin{equation*}
       \| (u_{\varepsilon,k})_{k \geq 0} \|_{\ell^2(L^2(\omega))}\leq \| u_1\|_{\ell^2(L^2(\omega))}.
    \end{equation*}
    Since $\| (u_k)_{k \geq 0}\|_{\ell^2(L^2(\omega))} \leq \lim \inf \|(u_{\epsilon,k})_{k \geq 0}\|_{\ell^2(L^2(\omega))}$, then
   \begin{equation*}
       \| (u_k)_{k \geq 0}\|_{\ell^2(L^2(\omega))} \leq \|u_1\|_{\ell^2(L^2(\omega))}.
   \end{equation*}
   This proves that $(u_k)_{k \geq 0}$ is norm optimal control of $(\mathcal{P})$.
\end{enumerate}
\end{proof}

\section{Conclusions and possible extensions}
In this study, we generalize the findings of \citep{MMOS} regarding impulsive null approximate controllability for singular and degenerate parabolic equations, establishing impulse null controllability through a sequence of pulses. To achieve this, we provide an explicit estimate of the exponential decay of the solution via impulse controls. Additionally, we address the norm-optimal impulsive control problem.

It is important to note that this work necessitates considering a sequence of carefully chosen pulses over the time horizon to establish impulse null controllability. This raises the intriguing question of whether null controllability can be achieved with a single pulse. Such a scenario represents a significant generalization of many works on the controllability of parabolic equations, as the impulsive control is a very weak control that acts only at a single instant in time and within an arbitrarily small region of the physical domain.

Another promising avenue for extending this work involves investigating the numerical controllability aspect, similar to the approach taken in \cite{CGMZ'221} for the one-dimensional heat equation.

\section*{Declarations}

\subsection*{Ethical Approval}
This study did not involve human participants or animals. Therefore, ethical approval and informed consent were not required. This declaration is not applicable.

\subsection*{Funding}
No funding was received to support this research. This declaration is not applicable.

\subsection*{Clinical Trial Number}
This study did not involve a clinical trial. This declaration is not applicable.

\subsection*{Disclosure Statement}
The authors declare that they have no conflicts of interest regarding the publication of this article.

\bibliography{sn-bibliography}

@article{ABD2006,
  author		= " Ammar-Khodja, F. and  Benabdallah, A. and  Dupaix, C.",
  title			= "Null-controllability of some reaction-diffusion systems with one control force",
  journal		= "J. Math. Anal. Appl.",
  volume		= "320",
  pages			= "928--943",
  year			= "2006"
}

@article{ABDG2009,
  author		= "Ammar Khodja, F.  and  Benabdallah, A. and  Dupaix, C. and  Gonzalez-Burgos, M.",
  title			= "A generalization of the Kalman rank condition for time-dependent coupled linear parabolic systems",
  journal		= "Differ. Equ. Appl.",
  volume		= "1",
  pages			= "427-457",
  year			= "2009"
}

@article{AJMW2024,
  author		= " Ait Ben Hassi, E. M. and Jakhoukh, M. and Maniar, L. and Zouhair, W.",
  title			= " Internal null controllability for the one-dimensional heat equation with dynamic boundary conditions",
  journal		= "IMA J. Math. Control. Inf.",
  volume		= "41",
  pages			= "403–424",
  year			= "2024"
}

@article{CGKM2023,
  author		= " Chorfi, S. E. and El. Guermai, G. and Khoutaibi, A. and Maniar, L.",
  title			= "Boundary null controllability for the heat equation with dynamic boundary conditions",
  journal		= "Evol. Equ. Control Theory",
  volume		= "12",
  pages			= "542--566",
  year			= "2023"
}

@article{BCG,
author = "Alabau-Boussouira, F. and Cannarsa, P. and Fragnelli, G.",
year = "2006",
pages = "161-204",
title = "Carleman estimates for degenerate parabolic operators with applications to null controllability",
volume = "6",
journal = "J. Evol. Equ.",
}

@article{BAJS,
  author		= "Allal, B. and Salhi, J.",
  title			= "Pointwise Controllability for Degenerate Parabolic Equations by the Moment Method",
  journal		= "J. Dyn. Control Syst.",
  volume		= "26",
  pages			= "349--362",
  year			= "2020"
}

@article{CMV,
author = "Cannarsa, P. and Martinez, P. and , Vancostenoble, J.",
year = "2005",
title = "Null controllability of degenerate heat equations",
volume = "10",
journal = "Adv. Differential Equations",
 pages = "153--190",
}

@article{AFS,
author = "Allal, B. and Fragnelli, G. and Salhi, J.",
year = "2022",
title = "Controllability for degenerate/singular parabolic systems involving memory terms",
journal = "Discrete Contin. Dyn. Syst. - S",
 pages = "1",
}

@article{FS,
author = "Fotouhi, M. and Salimi, L.",
title = "Null controllability of degenerate/singular parabolic equations",
volume = "18",
journal = "J. Dyn. Control Syst.",
year = "2012",
}

@article{V,
author = "Vancostenoble, J.",
title = "Improved Hardy-Poincaré inequalities and sharp Carleman estimates for degenerate/singular parabolic problems",
journal		= "Discrete Contin. Dyn. Syst. - S",
  volume		= "4",
  pages			= "761-790",
  year			= "2010"
}

@article{pkm,
  author		= " Phung, K. D.",
  title			= " Carleman commutator approach in logarithmic convexity for parabolic equations",
  journal		= "Math. Control Rel. Fields",
  volume		= "8",
  pages			= "899--933",
  year			= "2018"
}

@article{RBKDP2022,
  author		= " Buffe, R. and Phung, K. D.",
  title			= "Observation estimate for the heat equations with Neumann boundary condition via logarithmic convexity",
  journal		= "J. Evol. Equ.",
  volume		= "22",
  pages			= "86",
  year			= "2022"
}

@article{Coron2017,
  author		= " Coron , J. M. and Nguyen, H. M.",
  title			= "Null controllability and finite time stabilization for the heat equations with variable coefficients in space in one dimension via backstepping approach",
  journal		= "Arch Ration Mech Anal .",
  volume		= "225",
  pages			= "993--1023",
  year			= "2017"
}

@article{pkm1,
  author		= "Phung, K. D., Wang G. and Xu. Y",
  title			= "Impulse output rapid stabilization for heat equations",
  journal		= "J. Differential Equations",
  volume		= "263",
  pages			= "5012–5041",
  year			= "2017"
}

@article{MMOS,
    author = "Maarouf, H. Maniar, L. and Ouelddris, I. and Salhi, J.",
    title = "Impulse controllability for degenerate singular parabolic equations via logarithmic convexity method",
    journal = "IMA J. Math. Control Inf.",
    volume		= "40",
    pages = "1471-6887",
    year = "2023"
}

@article{FO,
author = "Frigon, M. and Oregan, D.",
year = "1995",
title = "Existence results for first-order impulsive differential equations",
volume = "193",
pages="96-113",
journal=". J. Math. Anal.Appl."
}

@article{SLAPWZ2,
  author		= " Lalvay, S. and Padilla-Segarra, A. and  Zouhair, W.",
  title			= "On the existence and uniqueness of solutions for non-autonomous semi-linear systems with non-instantaneous impulses, delay, and non-local conditions",
  journal		= "Miskolc Math. Notes",
  volume		= "23",
  pages			= "295--310",
  year			= "2022"
}

@article{Gal2015,
  author		= " Gal, C. G.",
  title			= "The role of surface diffusion in dynamic boundary conditions: Where do we stand?.",
  journal		= "Milan J Math.",
  volume		= "83",
  pages			= "237-278",
  year			= "2015"
}


\end{document}